\def\R{\mathbb{R}}
\def\N{\mathbb{N}}
\def\Co{\mathbb{C}}
\newcommand{\eps}{\varepsilon}
\renewcommand{\d}{\,{\rm d}}
\newtheorem{theorem}{Theorem}[section]
\newtheorem{definition}[theorem]{Definition}
\newtheorem{proposition}[theorem]{Proposition}
\newtheorem{lemma}[theorem]{Lemma}
\numberwithin{equation}{section}
\title[Block-radial symmetry breaking]{Block-radial symmetry breaking for \\ ground states of
biharmonic NLS}
\author{Rainer Mandel}
\address{
        Karlsruhe Institute of Technology\\
        Institute for Analysis\\
        Englerstrasse 2\\
        D-76131 Karlsruhe, Germany}
\email{rainer.mandel@kit.edu}
\author{Diogo Oliveira e Silva}
\address{ 
Departamento de Matem\'{a}tica\\ 
Instituto Superior T\'{e}cnico\\
Av. Rovisco Pais\\ 
1049-001 Lisboa, Portugal} 
\email{diogo.oliveira.e.silva@tecnico.ulisboa.pt}
\begin{document}

\subjclass[2010]{42B10}
\keywords{Symmetry breaking,  ground states, biharmonic NLS, Stein--Tomas inequality.}
 
\begin{abstract}
We prove that the biharmonic NLS equation
\[\Delta^2 u +2\Delta u+(1+\varepsilon)u=|u|^{p-2}u\,\,\,\textup{ in }\R^d\]
has at least $k+1$ different solutions if  $\eps>0$ is small enough and  $2<p<2_\star^k$, where $2_\star^k$ is 
an explicit critical exponent  arising from the Fourier restriction theory of $O(d-k)\times
O(k)$-symmetric functions. This extends the recent symmetry breaking result of Lenzmann--Weth \cite{LW21}  and
relies on a chain of strict inequalities for the corresponding Rayleigh quotients associated 
with distinct values of $k$. 
We further prove that, as $\eps\to 0^+$, the Fourier transform of each ground state concentrates  near the
unit sphere and becomes rough in the scale of Sobolev spaces.
\end{abstract}

\maketitle

\section{Introduction}

A {\it ground state} for the biharmonic nonlinear Schr\"odinger equation,
\begin{equation}\label{eq_BiHarNLS} 
\Delta^2 u +2\Delta u+(1+\varepsilon)u=|u|^{p-2}u\,\,\,\,\,\textup{ in }\R^d, \,\,\,(p>2,\eps>0)
\end{equation}
 is a nonzero solution $u\in H^2(\R^d)$ of \eqref{eq_BiHarNLS},
at which the infimum 
\begin{equation*}
    R_\varepsilon(p):=\inf_{\textbf{0}\neq u\in H^2(\R^d)}
\frac{{q}_\varepsilon(u)}{\|u\|_p^2},\,\,\,\text{where}\,\,\,{q}_\varepsilon(u):=\int_{\R^d}\left(|\Delta u|^2-2|\nabla u|^2+(1+\varepsilon)|u|^2\right)\,\textup d x,
\end{equation*}
is attained. Lenzmann and Weth  recently established the existence of \textit{nonradial} ground
states $u\in H^2(\R^d)$ of \eqref{eq_BiHarNLS}. More precisely, \cite[Theorem 1.2]{LW21} guarantees the
existence of a threshold $\varepsilon_0>0$ such that every ground state $u\in H^2(\R^d)$ of
\eqref{eq_BiHarNLS} is a nonradial function if $0<\varepsilon<\varepsilon_0$, as long as $d\geq 2$ and
$2<p<2_\star$. Here,  $2_\star:=2\frac{d+1}{d-1}$ is the endpoint Stein--Tomas exponent \cite{St93,To75}
from Fourier restriction theory. 
This symmetry breaking result is especially interesting in view of the recent work by Lenzmann and Sok
\cite{LS2021} on sufficient conditions for radial symmetry of ground states in the general framework of
elliptic pseudodifferential equations.
Our goal is to shed further light on the interplay between symmetries and ground states of elliptic PDEs by
generalizing the symmetry breaking result from~\cite{LW21} to a broader class of (pseudo-)differential
equations and by including the symmetry groups 
$$
  G_k := O(d-k)\times O(k), \qquad k\in\{1,\ldots,d-1\}
$$
in the analysis. A function $f:\R^d\to\Co$ is  \textit{$G_k$-symmetric} if $f\circ
A=f$ holds for every $A\in G_k$. Our main result implies that, for suitable  exponents
$p>2$ and sufficiently small $\eps>0$,  \textit{there exist radial and block-radial
solutions of \eqref{eq_BiHarNLS} whose energy is  larger than that of the ground states}. By our construction,
this immediately implies that \textit{not only do ground states of \eqref{eq_BiHarNLS}  fail to be radial
functions, but they also fail to be block-radial}. The sharp Stein--Tomas exponent $2_\star^k$  for
$G_k$-symmetric functions from~\cite{MOS21} will play a central role. It satisfies 
$2_\star^k=2_\star^{d-k}$ and is explicitly given by
\begin{equation}\label{eq_exp1}
    2_\star^k:=2\frac{d+(d-k)\wedge k}{d-2+(d-k)\wedge k}
    \qquad (x\wedge y:=\min\{x,y\}).
\end{equation}

\medskip
 
We shall consider nonlinear elliptic (pseudo-)differential equations
\begin{equation}\label{eq_BiHarNLSGen}
g_\eps(|D|) u = |u|^{p-2} u\,\,\,\textup{ in }\R^d,%,\,\,\,\,(p>2)
\end{equation}
for a certain class of so-called $(s,\gamma)$-\textit{admissible} symbols $g_\eps$. 
Here, $g_\eps(|D|)u = \mathcal F^{-1}(g_\eps(|\cdot|)\hat u)$. 
In the case $s=\gamma=2$, the symbol $g_\eps(|\xi|)=  (|\xi|^2-1)^2+\varepsilon$
of the biharmonic NLS is a prototypical example.

\begin{definition}\label{def_permissible}
    Let $s>\frac d{d+1}$ and $\gamma>1$. The symbol $g_\varepsilon:\R\to(0,\infty)$ is
    \emph{$(s,\gamma)$-admissible} if it is measurable and  satisfies the  two-sided estimates
\begin{align}
&c(1+|\xi|^{2s}) \leq g_\varepsilon(|\xi|) \leq C(1+|\xi|^{2s}),\,\,\,\textup{ for all }\left||\xi|-1\right|\geq \frac12,\label{eq_A1}\\
&c(\varepsilon+\left||\xi|-1\right|^\gamma)\leq g_\varepsilon(|\xi|) \leq C(\varepsilon+\left||\xi|-1\right|^\gamma),\,\,\,\textup{ for all }\left||\xi|-1\right|\leq \frac12,\label{eq_A2}
\end{align}
for some constants $0<c<C<\infty$ independent of $\eps\in (0,1)$.
\end{definition} 

Some comments are in order. We assume $\gamma>1$ in order to present a 
complete and unified analysis. Optimal results for  $\gamma\in (0,1]$ require
substantial modifications, which will not be investigated here.
The assumption $s> d/(d+1)$ is equivalent to $2_\star<{2d}/(d-2s)_+$, and thus ensures that the
endpoint Stein--Tomas exponent is Sobolev-subcritical for the problem under investigation. 
In light of \cite{BLSS20}, this ensures the existence of ground states for
\eqref{eq_BiHarNLSGen}, i.e., nonzero solutions which attain the infimum
\begin{equation}\label{eq_RayOr}
    \textup R^\circ_\varepsilon(p):=\inf_{\textbf{0}\neq u\in H^s(\R^d) }
    \frac{\textup{q}_\varepsilon(u)}{\|u\|_p^2},
    \quad\text{where }
    \textup{q}_\varepsilon(u):=\int_{\R^d}  |\widehat u (\xi)|^2 g_\varepsilon(|\xi|)\,\textup d \xi.
\end{equation}
For any fixed $\eps>0$, the functional $\textup{q}_\eps$ is positive and continuous on
$H^s(\R^d)$ as long as the symbol $g_\varepsilon$ is $(s,\gamma)$-admissible. More precisely, $u\mapsto
\sqrt{\textup{q}_\eps(u)}$ is then an equivalent norm on $H^s(\R^d)$. Being interested in symmetric solutions
of~\eqref{eq_BiHarNLSGen}, we define the following radial and $G_k$-symmetric versions of the Rayleigh
quotient in \eqref{eq_RayOr}:
 \begin{equation*}
\textup R_\varepsilon^{\textup{rad}}(p):=
  \inf_{\textbf{0}\neq u\in H_{\textup{rad}}^s}
  \frac{\textup{q}_\varepsilon(u)}{\|u\|_p^2};\,\,\,\,\,\,\textup
  R_\varepsilon^k(p):=\inf_{\textbf{0}\neq u\in H_k^s} \frac{\textup{q}_\varepsilon(u)}{\|u\|_p^2},
 \end{equation*}
 where the infima are taken over the Hilbert spaces  $H_{\textup{rad}}^s:=\{f\in H^s(\R^d): f \text{ is
 radial} \}$ and  $H^s_k:=\{f\in H^s(\R^d): f \text{ is } G_k\text{-symmetric}\}$, respectively. 
 We have $\textup R_\varepsilon^k(p)= \textup R_\varepsilon^{d-k}(p)$  with
 identical minimizers up to a trivial change of coordinates. Hence  in our main result we focus on 
 $k\in\{1,\ldots,\lfloor d/2\rfloor\}$.

 \begin{theorem}\label{thm_gen}
 Assume $d\geq 2$, $k\in\{1,\ldots,\lfloor d/2\rfloor\}$,  $2<p<2_\star^k$.
 Let $g_\eps$ be $(s,\gamma)$-admissible  for some  $s>\frac d{d+1}$ and $\gamma>1$.
 Then there exists $\varepsilon_0=\varepsilon_0(p,d,k,s,\gamma)$ such that, for $0<\varepsilon<
 \varepsilon_0$, the chain of strict inequalities
  \begin{equation}\label{eq_chain}
  \textup R^\circ_\varepsilon(p)\vee \textup R_\varepsilon^1(p)<\textup R_\varepsilon^2(p)<\cdots
  <\textup R_\varepsilon^k(p)
  <\textup R_\varepsilon^{k+1}(p) \wedge \ldots\wedge
  \textup R_\varepsilon^{\lfloor d/2\rfloor} (p) 
  \leq \textup R_\varepsilon^{\textup{rad}}(p)
  <\infty
  \end{equation}
  holds and each of these Rayleigh quotients, except possibly $\textup R_\eps^1(p)$, is attained at some
  nontrivial solution of~\eqref{eq_BiHarNLSGen}. In particular, there exist $k+1$ different nontrivial solutions
  and the ground state is neither radial nor block-radial.
\end{theorem}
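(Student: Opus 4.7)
The plan is to adapt the asymptotic strategy of Lenzmann--Weth \cite{LW21} from the radial-versus-general dichotomy to the graded family of symmetry groups $\{G_k\}$, using the sharp Stein--Tomas analysis for $G_k$-symmetric functions from \cite{MOS21} as the key input. First I would dispose of the non-strict parts of \eqref{eq_chain}: a radial Schwartz test function yields $\textup R^{\textup{rad}}_\varepsilon(p) < \infty$, and since $H^s_{\textup{rad}} \subset H^s_k$ for every $k$, this also gives $\textup R^k_\varepsilon(p) \leq \textup R^{\textup{rad}}_\varepsilon(p)$, securing the rightmost entry of the chain.

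The core of the argument is a sharp asymptotic analysis of each $\textup R^k_\varepsilon(p)$ as $\varepsilon \to 0^+$. By \eqref{eq_A2} the quadratic form $\textup{q}_\varepsilon$ concentrates Fourier mass near the unit sphere; using test functions whose Fourier transform is supported in a shell of width $\varepsilon^{1/\gamma}$ about $\sph{d-1}$, one has $\textup{q}_\varepsilon(u) \lesssim \varepsilon\|u\|_2^2$, and the leading behavior of $\textup R^k_\varepsilon(p)$ is governed by an extension problem $f \mapsto \widehat{f\,\textup{d}\sigma}$ restricted to $G_k$-symmetric $f$ on $\sph{d-1}$. A matching lower bound, obtained by a dyadic frequency decomposition combined with \eqref{eq_A1}--\eqref{eq_A2}, pins down the precise rate of decay of $\textup R^k_\varepsilon(p)$ in terms of a sharp restriction constant for $G_k$-symmetric functions, governed by the critical exponent $2_\star^k$ from \cite{MOS21}.

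The main obstacle will be establishing the strict inequalities $\textup R^k_\varepsilon(p) < \textup R^{k+1}_\varepsilon(p)$ for $0 < \varepsilon < \varepsilon_0$. Because $G_k$ and $G_{k+1}$ are incomparable subgroups of $O(d)$, no trivial inclusion argument is available: the strictness must be extracted from the asymptotic analysis. The key mechanism I anticipate is that, for $p < 2_\star^k$, Knapp-type test functions obtained by symmetrizing a spherical cap over a $G_k$-orbit (a $(d-2)$-dimensional pattern in $\sph{d-1}$) lie in the subcritical range for the $G_k$-Stein--Tomas problem and push the $G_k$-Rayleigh quotient strictly below the lower bound furnished by the $G_{k+1}$-theory, whose smaller critical exponent $2_\star^{k+1} < 2_\star^k$ prevents a matching saving on that side. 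A careful geometric comparison of the two Knapp constructions, perhaps combined with a contradiction argument excluding that $G_{k+1}$-near-extremizers could simultaneously realize the $G_k$-infimum, should yield the strict gap uniformly in $\varepsilon$.

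Attainment is handled separately. For $\textup R^\circ_\varepsilon(p)$ it is supplied by \cite{BLSS20}, and for $\textup R^k_\varepsilon(p)$ with $k \geq 2$ it follows from the direct method combined with a Strauss--Lions-type compact embedding $H^s_k \hookrightarrow L^p$ for $p < 2_\star^k$, made available by the nontrivial two-factor symmetry of $G_k$ when $k \geq 2$. The case $k=1$ is set aside because $2_\star^1 = 2_\star$, so that the $G_1$-symmetry yields no improvement over the general problem and the analogous compactness may fail; this justifies the caveat ``except possibly $\textup R_\varepsilon^1(p)$''. Once the chain \eqref{eq_chain} is strict through level $k$, the $k+1$ distinct attained values automatically produce $k+1$ distinct nontrivial solutions of \eqref{eq_BiHarNLSGen}, and the ground state, realizing the strictly smallest Rayleigh quotient, is neither radial nor block-radial.
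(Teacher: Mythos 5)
Your proposal follows the paper's strategy: two-sided asymptotics $\textup R^j_\eps(p)\cong \eps^{1-(1\wedge\alpha_j)/\gamma}$ obtained from the $G_j$-symmetric Stein--Tomas inequality (lower bounds) and $G_j$-symmetrized Knapp-type trial functions concentrated in an $\eps^{1/\gamma}$-shell (upper bounds), with the strictness of the chain read off from the fact that the interpolation exponent $\alpha_j=(d+j)(\tfrac12-\tfrac1p)$ is strictly increasing in $j\le\lfloor d/2\rfloor$, together with attainment via the Lions-type compact embedding $H^s_j\hookrightarrow L^p$ for $j\in\{2,\dots,d-2\}$ and \cite{BLSS20} for the unconstrained problem. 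The only streamlining relative to your sketch is that no auxiliary contradiction argument about near-extremizers is needed: once the matching powers of $\eps$ are established, the strict inequalities follow immediately for all sufficiently small $\eps$.
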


 Specializing to $g_\eps(|\xi|)=(|\xi|^2-1)^2+\eps$ and  $(s,\gamma,k)=(2,2,1)$,
 we recover the symmetry breaking result from~\cite[Theorem 1.2]{LW21} since
 $2_\star^1 = 2\frac{d+1}{d-1}=2_\star$.
The strict inequalities in~\eqref{eq_chain} rely on lower and upper bounds for the Rayleigh quotients that allow us
to determine the asymptotic behaviour of $\textup R^\ast_\varepsilon(p)$ as $\eps\to 0^+$, for
$\ast\in\{\circ,k,\textup{rad}\}$.
This is the content of our next result, Theorem~\ref{thm_LUBounds}. We introduce the  interpolation parameter
\begin{equation}\label{eq_alphak}
\alpha_k:=\frac{\frac12-\frac1p}{\frac12-\frac1{2_\star^k}}= \left\{ \begin{array}{ll}
(d+k)(\frac12-\frac1p), & \textrm{if $k\leq\frac d2$},\\
(2d-k)(\frac12-\frac1p), & \textrm{if $k>\frac d2$},
\end{array} \right. 
\end{equation}
which satisfies $\alpha_k\in(0,1)$ if and only if $2<p<2_\star^k$.
We further define
\begin{equation}\label{eq_exp2}
    \alpha_{\textup{rad}}:=\frac{\frac12-\frac1p}{\frac12-\frac1{2_\star^{\textup{rad}}}} 
    = 2d\left(\frac{1}{2}-\frac{1}{p}\right), \text{ where }
    2_\star^{\textup{rad}}:=\frac{2d}{d-1},
\end{equation}
which satisfies $\alpha_{\textup{rad}}\in(0,1)$ if and only if $2<p<2_\star^{\textup{rad}}$. 
We also need the Sobolev exponent\footnote{We take this opportunity to record that the exponents defined in \eqref{eq_exp1}, \eqref{eq_exp2} and \eqref{eq_exp3} satisfy
\[2<2_\star^{\textup{rad}}<2_\star^k\leq 2_\star^1=2_\star<2_s^\star\leq\infty.\]}
\begin{equation}\label{eq_exp3}
    2_s^\star:=\left\{ \begin{array}{ll}
\frac{2d}{d-2s}, & \textrm{if $0\leq s<\frac d2$},\\
\infty, & \textrm{if $s\geq\frac d2$},
\end{array} \right. 
\end{equation}
which ensures the largest possible range of validity in the next result.

\begin{theorem}[Lower \& upper bounds]\label{thm_LUBounds}
Assume $d\geq 2$, $k\in\{1,\ldots,d-1\}$,  $2< p< 2_s^\star$.  
Let $g_\varepsilon$ be $(s,\gamma)$-admissible for some $s>\frac{d}{d+1}$ and $\gamma>1$. 
As $\eps\to 0^+$, it holds that 
\begin{equation}\label{eq_LBReps2} 
 \textup R^\circ_\varepsilon(p) 
 \cong \eps^{1-\frac{1\wedge \alpha_1}\gamma}, 
  \end{equation}
\begin{equation}\label{eq_LBReps} 
\textup R_\varepsilon^k(p) \cong \eps^{1-\frac{1\wedge \alpha_k}\gamma},
\end{equation}
\begin{equation}\label{eq_LBReps3} 
\textup R_\varepsilon^{\textup{rad}}(p) \cong
 \left\{ \begin{array}{ll}
 \eps^{1-\frac{1\wedge \alpha_{\textup{rad}}}\gamma},
 & \textrm{if $p\neq 2_\star^{\textup{rad}}$},\\
\eps^{1-\frac1{\gamma}} |\log(\eps)|^{\frac{1-d}d},
&\textrm{if $p= 2_\star^{\textup{rad}}$}.
\end{array} \right. 
\end{equation}
\end{theorem}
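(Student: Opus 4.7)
The plan is to establish matching upper and lower bounds for each of $\textup R^\circ_\eps(p)$, $\textup R^k_\eps(p)$, and $\textup R^{\textup{rad}}_\eps(p)$ via concentration on the unit sphere at the scale $\delta=\eps^{1/\gamma}$, at which the two terms of $g_\eps$ in~\eqref{eq_A2} balance.

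For the upper bounds I would test with functions of the form $u_\delta(x)=\eta(\delta x)\,\widehat{\phi\,{\rm d}\sigma}(x)$, where $\eta\in C^\infty_c(\R^d)$ is a fixed cutoff and $\phi$ is a smooth bump on $\sph{d-1}$ with the appropriate symmetry (radial for $\textup R^{\textup{rad}}_\eps$, $G_k$-symmetric for $\textup R^k_\eps$, supported in a small cap for $\textup R^\circ_\eps$). Since $\widehat{u_\delta}$ concentrates in a $\delta$-thin neighbourhood of $\sph{d-1}$, \eqref{eq_A2} and Plancherel give $\textup q_\eps(u_\delta)\lesssim \eps\|u_\delta\|_2^2\lesssim \eps/\delta=\eps^{1-1/\gamma}$, while $\|u_\delta\|_p$ is computed from the stationary-phase decay of $\widehat{\phi\,{\rm d}\sigma}(x)\sim|x|^{-(d-1)/2}$ (with the decay rate slowed in the $G_k$-case by the orbit structure). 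The resulting integrals fall into three regimes according to whether $p$ is below, at, or above the relevant Stein--Tomas exponent, and evaluating them reproduces the three branches in~\eqref{eq_LBReps2}--\eqref{eq_LBReps3}; in particular the critical $p=2_\star^{\textup{rad}}$ produces the logarithmic factor as the borderline divergent integral $\int_1^{1/\delta}r^{-1}{\rm d}r=\log(1/\delta)$.

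For the lower bounds I would Littlewood--Paley decompose $u=u_\infty+\sum_{j\geq 0}u_j$, with $u_j$ Fourier-localised to $\{||\xi|-1|\sim 2^{-j}\}$. From~\eqref{eq_A1}--\eqref{eq_A2} one gets $\textup q_\eps(u_j)\gtrsim (\eps+2^{-j\gamma})\|u_j\|_2^2$ and $\textup q_\eps(u_\infty)\gtrsim \|u_\infty\|_{H^s}^2$, so the far piece is disposed of via the Sobolev embedding $H^s\hookrightarrow L^p$ (valid because $p<2_s^\star$, which is precisely where $s>d/(d+1)$ enters). On each dyadic annulus the relevant Stein--Tomas extension inequality (classical for $\textup R^\circ_\eps$; the $G_k$-symmetric refinement of~\cite{MOS21} for $\textup R^k_\eps$; the radial version for $\textup R^{\textup{rad}}_\eps$) combined with interpolation yields $\|u_j\|_p\lesssim 2^{-j(1\wedge\alpha_\ast)/2}\|u_j\|_2$. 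Summing by the triangle inequality and Cauchy--Schwarz with weight $(\eps+2^{-j\gamma})^{-1/2}$ gives
\[
\|u\|_p^2\lesssim \textup q_\eps(u)\sum_{j\geq 0}\frac{2^{-j(1\wedge\alpha_\ast)}}{\eps+2^{-j\gamma}},
\]
and splitting this series at the critical scale $j^\ast\sim\gamma^{-1}\log_2(1/\eps)$ evaluates it to $\eps^{(1\wedge\alpha_\ast)/\gamma-1}$, matching the upper bound.

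The hard part will be the critical endpoint $p=2_\star^{\textup{rad}}$, where the naive dyadic sum misses the logarithmic refinement in~\eqref{eq_LBReps3}. Recovering the $|\log\eps|^{(1-d)/d}$ correction requires a log-improved radial Stein--Tomas inequality (or a Lorentz-space formulation thereof), reflecting the fact that radial Fourier extension is essentially a Bessel transform landing only in $L^{2_\star^{\textup{rad}},\infty}$. A secondary subtlety is the supercritical regime $\alpha_\ast>1$ (i.e.\ $p>2_\star^\ast$), where Stein--Tomas cannot be applied directly and one must instead chain the endpoint inequality at $p=2_\star^\ast$ with a Bernstein-type $L^{2_\star^\ast}\to L^p$ estimate to recover $\|u_j\|_p\lesssim 2^{-j/2}\|u_j\|_2$ and hence the asymptotic $\eps^{1-1/\gamma}$.
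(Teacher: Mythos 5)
Your overall strategy matches the paper's: lower bounds by disposing of the far-from-sphere piece via Sobolev embedding, then slicing the near piece into spheres, applying the appropriate ($G_k$-symmetric) Stein--Tomas inequality on each, and interpolating with the trivial bound $\|v\|_2^2\lesssim\eps^{-1}\textup q_\eps(v)$; upper bounds via Knapp-type test functions whose Fourier transforms concentrate in an $\eps^{1/\gamma}$-neighbourhood of $\sph{d-1}$. Your dyadic Littlewood--Paley organisation of the lower bound is a discretised version of the paper's single near/far splitting followed by Cauchy--Schwarz in the radial variable against the weight $g_\eps(r)^{-1}$ (Lemma~\ref{lem_technical}); both evaluate to $\eps^{(1\wedge\alpha_\ast)/\gamma-1}$ and are equally valid. (A small simplification: for $p\geq 2_\star^k$ no Bernstein step is needed, since \eqref{eq_GkSymST} already holds for every $p\geq 2_\star^k$.)

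The genuine gap is in the $G_k$-symmetric upper bound, which is where the paper does its real work (Proposition~\ref{prop:pointwiselowerbound} and Lemma~\ref{lem_UpperBoundsPrep}). A test function $\eta(\delta x)\widehat{\phi\d\sigma}(x)$ with a \emph{fixed} $G_k$-symmetric bump $\phi$ and the single scale $\delta=\eps^{1/\gamma}$ cannot produce $\textup R^k_\eps(p)\lesssim\eps^{1-\alpha_k/\gamma}$ for $2<p<2_\star^k$: with a fixed angular profile the decay of $\widehat{\phi\d\sigma}$ is the generic $|x|^{-(d-1)/2}$ and the computation collapses to the radial one, yielding only $\eps^{1-(1\wedge\alpha_{\textup{rad}})/\gamma}$, which is strictly \emph{larger} than $\eps^{1-\alpha_k/\gamma}$ because $\alpha_{\textup{rad}}>\alpha_k$. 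One must instead take $\phi={\bf 1}_{\mathcal C^k_\delta}$ on a shrinking $G_k$-symmetric cap of angular width $\delta$ paired with radial thickness $m=\delta^2$ (hence $\delta=\eps^{1/(2\gamma)}$), and then prove a quantitative pointwise lower bound for $\widehat{{\bf 1}_{\mathcal C^k_\delta}\sigma}$ on explicit sets $E_j$ of controlled measure; this rests on the product-Bessel representation \eqref{eq:Ck} and the local maxima of $J_{(k-2)/2}$, and it is exactly here that the ``orbit-slowed'' decay is quantified as $\delta^{d-k}j^{(1-k)/2}$ on sets of measure $\gtrsim\delta^{k-d}j^{k-1}$. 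Your one-line gesture at the orbit structure leaves this, the crux of the upper bound, unproved. Separately, at the critical radial exponent $p=2_\star^{\textup{rad}}$ you correctly identify that the radial extension only lands in $L^{2_\star^{\textup{rad}},\infty}$ and that a log-improved endpoint is required; the paper resolves this with an elementary layer-cake interpolation (Proposition~\ref{prop:interpolate}) between the $L^2$ bound and the pointwise bound $|v(x)|\lesssim(1+|x|)^{(1-d)/2}\eps^{(1/\gamma-1)/2}\textup q_\eps(v)^{1/2}$, so this part of your plan is sound in outline but still requires that lemma to be stated and proved.
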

\noindent 

Here, $A_\eps\cong B_\eps$ means that there exist constants $0<c<C<\infty$, independent of $\eps>0$, such that
$cB_\eps\leq A_\eps\leq CB_\eps$ holds for all sufficiently small $\eps>0$.
We remark that the logarithmic term appearing when
$p=2_\star^{\textup{rad}}$ is new even in the special case of the biharmonic NLS, and thus refines the
estimates from~\cite{LW21}. This particular bound relies on a
simple but nonstandard interpolation result that the interested reader may find in
Appendix~\ref{app_interpolation}.

\medskip

The proof of Theorem \ref{thm_LUBounds} naturally splits into two parts: lower and upper bounds for each of
the Rayleigh quotients. 
The lower bounds hinge on the following recent $G_k$-symmetric refinement of the Stein--Tomas inequality. If
$d\geq 2$ and $k\in\{1,\ldots,d-1\}$, then \cite[Theorem 1.1]{MOS21} ensures the validity of the adjoint restriction
inequality
\begin{equation}\label{eq_GkSymST}
\|\widehat{f\sigma}\|_{L^p(\R^d)} \leq C(k,d,p) \|f\|_{L^2(\mathbb S^{d-1})} \text{ for all } p\geq 2_\star^k,
\end{equation}
for every $G_k$-symmetric function $f:\R^d\to\Co$, where the adjoint restriction (or \textit{extension}) operator to the unit sphere $\mathbb S^{d-1}:=\{\omega\in\R^d: |\omega|=1\}$ is given by
\begin{equation}\label{eq_ExtOp}
\widehat{f\sigma} (x) = \int_{\mathbb S^{d-1}} e^{i \omega\cdot x} f(\omega) \,\d\sigma(\omega).\,\,\,(x\in\R^d)
\end{equation}
More precisely, estimate~\eqref{eq_GkSymST} 
was shown in~\cite{MOS21} for $k\in\{2,\ldots,d-2\}$, whereas the case $k\in\{1,d-1\}$ is a direct consequence of the
classical Stein--Tomas inequality since $2_\star^1=2_\star^{d-1}=2_\star$. 
That the range of exponents in \eqref{eq_GkSymST} is optimal for $L^2$-densities follows from \cite[Theorem
1.3]{MOS21} and is based on a careful analysis of a $G_k$-symmetric version of Knapp's well-known construction.
Inequality \eqref{eq_GkSymST} and these $G_k$-symmetric Knapp-type constructions together pave the way
towards the precise asymptotics given by Theorem~\ref{thm_LUBounds}.
The asymptotics from Theorem \ref{thm_LUBounds} imply the chain of inequalities \eqref{eq_chain} stated in Theorem \ref{thm_gen}. 
To finish the proof of the latter, one still has to establish the existence of minimizers within the relevant
class of functions. In turn, this amounts to a straightforward argument relying on certain compact embeddings of $H^s_k$,
$k\in\{2,\ldots,d-2\}$, that we recall in Appendix~\ref{app_existence}. If  
$k\in\{1,d-1\}$, then the existence of a minimizer remains an open question, which we discuss in
Appendix~\ref{app_exceptionalcases}.

\medskip

Once the existence of  minimizers has been established, their qualitative properties become of  interest.
To explore this matter, define the interval
\begin{equation}\label{eq_Iinterval}
    I_{\eps,\delta}:=[\delta\eps^{1/\gamma},\delta^{-1}\eps^{1/\gamma}], \,\,\,(\eps,\delta>0)
\end{equation}
and the associated spherical shell
\begin{equation}\label{eq_sphshell}
    A_{\eps,\delta}:=\{\xi\in\R^d: ||\xi|-1|\in I_{\eps,\delta}\}.
\end{equation}
Given a minimizer $u_\eps$ for $\textup R^\ast_\varepsilon(p)$ with
$\ast\in\{\circ,k,\textup{rad}\}$, we split
\begin{equation}\label{eq_split}
u_\eps=v_\eps+w_\eps, \text{ where } \widehat{v}_\eps=\textbf{1}_{A_{\eps,\delta}}\widehat{u}_\eps.
\end{equation}
This decomposition preserves radiality and $G_k$-symmetry.
Our next result reveals that $u_\eps$ {\it concentrates on the unit sphere in Fourier space}, as $\eps\to 0^+$.

\begin{theorem}\label{thm_concentration}
Assume $d\geq 2$, $k\in\{1,\ldots,d-1\}$, $2< p< 2_s^\star$. 
Let $g_\varepsilon$ be $(s,\gamma)$-admissible for some $s>\frac{d}{d+1}$ and $\gamma>1$.
Let $\ast\in\{\circ,k,\textup{rad}\}$.
If $u_\eps$ is a minimizer for $\textup R^\ast_\varepsilon(p)$,
decomposed as in \eqref{eq_split} with $\delta=\delta_\eps$ for a given positive null sequence
$(\delta_\eps)_{\eps>0}$, then \[\lim_{\eps\to 0^+} \frac{\|w_\eps\|_p}{\|v_\eps\|_p}
=\lim_{\eps\to 0^+} \frac{\textup{q}_\eps(w_\eps)}{\textup{q}_\eps(v_\eps)}=0.\]
\end{theorem}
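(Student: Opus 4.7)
The plan is to show first that $\|w_\eps\|_p/\|u_\eps\|_p\to 0$ by a direct $L^p$-analysis of each Fourier piece of $w_\eps$, and then to upgrade this to $q_\eps(w_\eps)/q_\eps(u_\eps)\to 0$ using minimality of $u_\eps$. Write $\alpha$ for the exponent $\alpha_1$, $\alpha_k$, or $\alpha_{\textup{rad}}$ attached to $\ast\in\{\circ,k,\textup{rad}\}$ (and analogously $2_\star^\ast$). Theorem~\ref{thm_LUBounds} and minimality give
\[
q_\eps(u_\eps)=\textup R^\ast_\eps(p)\|u_\eps\|_p^2\lesssim \eps^{1-(1\wedge\alpha)/\gamma}\|u_\eps\|_p^2,
\]
and Plancherel together with the disjointness of the Fourier supports of $v_\eps$ and $w_\eps$ yields the orthogonality identity $q_\eps(u_\eps)=q_\eps(v_\eps)+q_\eps(w_\eps)$.

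The first step is to split $w_\eps=w_\eps^{in}+w_\eps^{mid}+w_\eps^{out}$ according to whether the Fourier variable lies in $\{||\xi|-1|<\delta_\eps\eps^{1/\gamma}\}$, in $\{\delta_\eps^{-1}\eps^{1/\gamma}<||\xi|-1|\leq\tfrac12\}$, or in $\{||\xi|-1|>\tfrac12\}$. The outer piece is handled by the Sobolev embedding $H^s\hookrightarrow L^p$, valid because $p<2_s^\star$, combined with the lower bound $g_\eps(|\xi|)\gtrsim 1+|\xi|^{2s}$ on its support: this gives $\|w_\eps^{out}\|_p^2\lesssim q_\eps(u_\eps)$. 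For the two low-frequency pieces the key ingredient is an auxiliary \emph{thin-shell extension estimate}: if $h$ shares the symmetry of $u_\eps$ and $\widehat h$ is supported in a shell of thickness $\eta\in(0,1]$ about $\mathbb S^{d-1}$, then $\|h\|_p^2\lesssim \eta^{1\wedge\alpha}\|h\|_2^2$. I would prove this by foliating the shell into spheres of radii $r\in[1-\eta,1+\eta]$, applying the Stein--Tomas inequality~\eqref{eq_GkSymST} (or its radial or non-symmetric analogue) on each slice, and integrating via Cauchy--Schwarz; the subcritical range $p<2_\star^\ast$ is recovered by interpolating the resulting $L^{2_\star^\ast}$-estimate against the trivial $L^2$-estimate, which produces precisely the exponent $\alpha$. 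Applying this to $w_\eps^{in}$ with $\eta=\delta_\eps\eps^{1/\gamma}$ and using $g_\eps\gtrsim \eps$ yields $\|w_\eps^{in}\|_p^2\lesssim \delta_\eps^{1\wedge\alpha}\|u_\eps\|_p^2$. For $w_\eps^{mid}$ I would decompose dyadically into annuli of thickness $\eta_j\cong 2^j\delta_\eps^{-1}\eps^{1/\gamma}$, exploit the sharper lower bound $g_\eps\gtrsim \eta_j^\gamma$ there, and sum in $j$ by Cauchy--Schwarz; the resulting geometric series converges because $\gamma>1\geq 1\wedge\alpha$, producing $\|w_\eps^{mid}\|_p^2\lesssim \delta_\eps^{\gamma-(1\wedge\alpha)}\|u_\eps\|_p^2$.

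Summing the three bounds one obtains
\[
\|w_\eps\|_p^2/\|u_\eps\|_p^2\lesssim \delta_\eps^{1\wedge\alpha}+\delta_\eps^{\gamma-(1\wedge\alpha)}+\eps^{1-(1\wedge\alpha)/\gamma},
\]
which tends to $0$ as $\eps\to 0^+$, so the triangle inequality forces $\|v_\eps\|_p/\|u_\eps\|_p\to 1$ and hence the first claimed limit. For the $q_\eps$-ratio I would note that the Fourier cutoff $\mathbf 1_{A_{\eps,\delta_\eps}}$ is rotationally invariant, so the decomposition~\eqref{eq_split} preserves both radial and $G_k$-symmetry and $v_\eps$ is an admissible competitor in the variational problem defining $\textup R^\ast_\eps(p)$. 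Minimality of $u_\eps$ therefore yields $q_\eps(v_\eps)/q_\eps(u_\eps)\geq \|v_\eps\|_p^2/\|u_\eps\|_p^2\to 1$, and the orthogonality identity above forces $q_\eps(w_\eps)/q_\eps(u_\eps)\to 0$, whence $q_\eps(w_\eps)/q_\eps(v_\eps)\to 0$.

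The main obstacle is the thin-shell extension estimate in the subcritical range $p\in(2,2_\star^\ast)$, where Stein--Tomas alone gives nothing: the interpolated exponent $1\wedge\alpha$ must match the Rayleigh-quotient asymptotics of Theorem~\ref{thm_LUBounds} exactly, as otherwise the three-piece sum above fails to be $o(1)$. The boundary case $p=2_\star^{\textup{rad}}$, where \eqref{eq_LBReps3} carries a logarithmic correction, presumably requires the refined interpolation of Appendix~\ref{app_interpolation} in place of Stein--Tomas on the thin shells.
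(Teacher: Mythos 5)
Your proposal is correct and follows essentially the same route as the paper: the paper's Lemma~\ref{lem_LBMdelta} obtains your thin-shell bounds in one stroke by estimating $\int_{|r-1|\notin I_{\eps,\delta_\eps}}g_\eps(r)^{-1}\,\d r\cong(\delta_\eps+\delta_\eps^{\gamma-1})\eps^{1/\gamma-1}$ via Cauchy--Schwarz (subsuming your inner/middle/outer and dyadic splittings), interpolates with the $L^2$ bound exactly as you do in the subcritical range, and compares with the asymptotics of Theorem~\ref{thm_LUBounds}. The only organizational difference is the final step: the paper converts the resulting bound $\textup q_\eps(w_\eps)/\|w_\eps\|_p^2\gtrsim M_\eps\textup R_\eps^\ast(p)$ with $M_\eps\to\infty$ into both limits via the algebraic Lemma~\ref{lem_conc1}, whereas you use minimality of $u_\eps$ against the competitor $v_\eps$ together with the orthogonality $\textup q_\eps(u_\eps)=\textup q_\eps(v_\eps)+\textup q_\eps(w_\eps)$ --- an equivalent and equally valid argument.
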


We emphasize that this result also applies to the case $\ast = k\in\{1,d-1\}$ 
where the existence of a minimizer is still open. 

\medskip

In the smaller
$G_k$-symmetry breaking range $2<p<2_\star^k$,  minimizers   \textit{become rough in Fourier space}, as $\eps\to 0^+$. 
The intuition comes from the fact that 
the upper bounds in Theorem \ref{thm_LUBounds} were proved via test functions that resemble Knapp
counterexamples. The Fourier transform of such functions vary sharply  along
spheres. Our next result indicates that such behaviour is somewhat necessary in order to be energetically efficient. The regularity is measured in the Sobolev space $H^t(\mathbb S^{d-1})$ of
functions having $t\geq 0$ derivatives in $L^2(\mathbb S^{d-1})$. This space is defined via spherical
harmonic expansions, e.g.\ as in \cite[\S1.7.3, Remark 7.6]{LM72}, or equivalently by considering a smooth
finite partition of unity and diffeomorphisms onto the unit ball of $\R^{d-1}$ together with the usual Sobolev
norm on $\R^{d-1}$.

\begin{theorem}\label{thm_roughness}
Assume $d\geq 2$, $k\in\{1,\ldots,d-1\}$,  $2< p< 2_\star^k$.
Let $g_\varepsilon$ be $(s,\gamma)$-admissible for some $s>\frac{d}{d+1}$ and $\gamma>1$. Let
$\ast\in\{\circ,k\}$.
If $u_\eps$ is a minimizer for $\textup R^\ast_\varepsilon(p)$, then  for every $t>0$ and
every positive null sequence $(\delta_\eps)_{\eps>0}$,  it holds that
\begin{equation}\label{eq_ConclThmRough}
    \lim_{\eps\to 0^+}\sup_{|r-1|\in I_{\eps,\delta_\eps}} \frac{\|\widehat{u}_\eps(r\,\cdot)\|_{H^t(\mathbb S^{d-1})}}{\|\widehat{u}_\eps(r\,\cdot)\|_{L^2(\mathbb S^{d-1})}} = \infty.
\end{equation}
\end{theorem}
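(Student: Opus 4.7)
\emph{Proof plan.} Argue by contradiction: suppose there exist $t>0$, a subsequence $\eps_n\to 0^+$, and $M<\infty$ such that, writing $f_{r,n}(\omega):=\widehat{u}_{\eps_n}(r\omega)$,
\[
\|f_{r,n}\|_{H^t(\mathbb{S}^{d-1})}\leq M\|f_{r,n}\|_{L^2(\mathbb{S}^{d-1})}
\]
uniformly for $|r-1|\in I_{\eps_n,\delta_{\eps_n}}$. Normalise $\|u_\eps\|_p=1$, so that Theorem~\ref{thm_LUBounds} gives $\textup{q}_\eps(u_\eps)\cong\eps^{1-\alpha_k/\gamma}$ and Theorem~\ref{thm_concentration} delivers $\|v_\eps\|_p\geq c>0$ for the splitting~\eqref{eq_split}. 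The plan is to exploit the hypothesised angular regularity in order to derive an $L^p$-bound on $v_\eps$ of the form $C_M\eps^{\sigma}$ with $\sigma>0$, forcing $\|v_\eps\|_p\to 0$ and contradicting concentration.

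The central ingredient is an improved extension estimate: for every $G_k$-symmetric $f$ on the sphere with $\|f\|_{H^t(\mathbb{S}^{d-1})}\leq M\|f\|_{L^2(\mathbb{S}^{d-1})}$,
\[
\|\widehat{f\sigma}\|_{L^{p_2}(\R^d)}\leq C_M\|f\|_{L^2(\mathbb{S}^{d-1})}\qquad\text{for some }p_2\in(2_\star^{\textup{rad}},2_\star^k).
\]
For $t>(d-1)/2$ this follows from the Sobolev embedding $H^t\hookrightarrow C^0(\mathbb{S}^{d-1})$ and the standard stationary phase bound $|\widehat{f\sigma}(x)|\lesssim(1+|x|)^{-(d-1)/2}\|f\|_\infty$, which covers any $p_2>2_\star^{\textup{rad}}=2d/(d-1)$; for general $t>0$ one interpolates the $G_k$-symmetric Stein--Tomas estimate~\eqref{eq_GkSymST} against the Sobolev embedding $H^t\hookrightarrow L^{q_t}(\mathbb{S}^{d-1})$. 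Granted this, passing to polar coordinates and using Minkowski's inequality yields
\[
\|v_\eps\|_{L^{p_2}}\lesssim C_M\int_{|r-1|\in I_{\eps,\delta_\eps}}\|f_r\|_{L^2(\mathbb{S}^{d-1})}\,dr.
\]
The radial integral is handled by a weighted Cauchy--Schwarz inequality with weight $(\eps+\rho^\gamma)^{\pm 1/2}$, which bounds it by $\textup{q}_\eps(v_\eps)^{1/2}\bigl(\int(\eps+\rho^\gamma)^{-1}\,d\rho\bigr)^{1/2}$. The substitution $\rho=\eps^{1/\gamma}s$ turns the second factor into $\eps^{(1-\gamma)/(2\gamma)}\bigl(\int_0^\infty(1+s^\gamma)^{-1}\,ds\bigr)^{1/2}$, and this $s$-integral is finite \emph{precisely because} $\gamma>1$. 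Combining with $\textup{q}_\eps(v_\eps)\lesssim\eps^{1-\alpha_k/\gamma}$ produces $\|v_\eps\|_{L^{p_2}}\lesssim C_M\eps^{(1-\alpha_k)/(2\gamma)}$.

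Interpolating $L^p$ between $L^2$ and $L^{p_2}$ with $\tfrac1p=\tfrac{1-\theta}2+\tfrac{\theta}{p_2}$, and combining with the Plancherel-type bound $\|v_\eps\|_{L^2}\leq\|\widehat{u}_\eps\|_{L^2}\lesssim\eps^{-\alpha_k/(2\gamma)}$ (a consequence of $\textup{q}_\eps(u)\geq c\eps\|\widehat u\|_{L^2}^2$), one obtains $\|v_\eps\|_p\lesssim C_M^\theta\eps^{(\theta-\alpha_k)/(2\gamma)}$. As $p_2\to\infty$, $\theta\to 1-\tfrac2p=2(\tfrac12-\tfrac1p)$; meanwhile, formula~\eqref{eq_alphak} gives $\alpha_k\geq 3(\tfrac12-\tfrac1p)$ throughout the range $d\geq 2$, $k\in\{1,\ldots,d-1\}$, which strictly exceeds the limiting value of $\theta$. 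Choosing $p_2$ large enough thus secures $\theta<\alpha_k$, so the exponent $(\theta-\alpha_k)/(2\gamma)$ is strictly positive, forcing $\|v_\eps\|_p\to 0$ --- a contradiction with concentration. The hard part, I expect, is the improved extension estimate for small $t>0$: producing an $L^{p_2}$-bound with $p_2<2_\star^k$ from only $L^{q_t}$-regularity (with $q_t$ barely above~$2$) requires a quantitative use of the $G_k$-symmetric structure and the sharpness analysis of~\cite{MOS21}, rather than the soft stationary-phase argument available when $t>(d-1)/2$.
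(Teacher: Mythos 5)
Your overall architecture coincides with the paper's: argue by contradiction, split $u_\eps=v_\eps+w_\eps$ as in \eqref{eq_split}, dispose of $w_\eps$ via the concentration results, and then beat the sharp asymptotics \eqref{eq_LBReps} on the $v_\eps$ part by upgrading the assumed angular regularity to an extension estimate at an exponent strictly below $2_\star^k$, followed by interpolation with the $L^2$ bound \eqref{eq_L2BD}. The Cauchy--Schwarz step in the radial variable and the use of Lemma~\ref{lem_technical} are also correct. But the step you yourself flag as the hard part is a genuine gap, and it is the \emph{only} case that matters: since $\|f\|_{H^{t'}(\sph{d-1})}\geq\|f\|_{H^t(\sph{d-1})}$ for $t'\geq t$, the conclusion \eqref{eq_ConclThmRough} for small $t$ implies it for large $t$, so the regime $t>\frac{d-1}{2}$ where your stationary-phase argument applies may be discarded, and everything hinges on arbitrarily small $t>0$. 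There, interpolating \eqref{eq_GkSymST} against the Sobolev embedding $H^t\hookrightarrow L^{q_t}(\sph{d-1})$ yields nothing: the embedding is not an extension estimate, and no $L^{q_t}(\sph{d-1})\to L^{p}(\R^d)$ extension bound with $p<2_\star^k$ is available for $q_t$ barely above $2$. The paper's Proposition~\ref{prop_smoothness} instead real-interpolates two \emph{operator} bounds --- the $G_k$-symmetric Stein--Tomas estimate $L^2(\sph{d-1})\to L^{2_\star^k}(\R^d)$ and the bound $H^{t^\star}(\sph{d-1})\to L^{2_\star^{\textup{rad}},\infty}(\R^d)$ coming from the pointwise decay $|\widehat{f\sigma}(x)|\lesssim\|f\|_{C^m(\sph{d-1})}(1+|x|)^{\frac{1-d}{2}}$ with $m=\lfloor\frac{d-1}{2}\rfloor+1$ and $H^{t^\star}\hookrightarrow C^m$ --- using $(L^2,H^{t^\star})_{t/t^\star,2}=H^t$ to obtain $\|\widehat{f\sigma}\|_{2_\star^k(t)}\lesssim_t\|f\|_{H^t(\sph{d-1})}$ with $2_\star^k(t)<2_\star^k$ for every $t\in(0,t^\star)$. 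This is precisely the estimate your argument needs and does not supply.

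There is also a sign and direction error in your final step. Once the improved estimate holds at some $p_2<2_\star^k$, set $\theta=\frac{1/2-1/p}{1/2-1/p_2}$; then $p_2<2_\star^k$ \emph{automatically} forces $\theta>\alpha_k$, and $(\theta-\alpha_k)/(2\gamma)>0$ is exactly what drives $\|v_\eps\|_p\to 0$. Your text instead lets $p_2\to\infty$ (incompatible with the requirement $p_2<2_\star^k$), concludes $\theta<\alpha_k$, and asserts that $(\theta-\alpha_k)/(2\gamma)$ is then positive --- it would be negative, and the bound $\|v_\eps\|_p\lesssim C_M^\theta\eps^{(\theta-\alpha_k)/(2\gamma)}$ would be useless. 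The numerology $\alpha_k\geq 3(\tfrac12-\tfrac1p)$ is a red herring. The correct choice is $p_2=2_\star^k(t)$ after lowering $t$ so that $p<2_\star^k(t)$ (possible since $2_\star^k(t)\nearrow 2_\star^k>p$ as $t\to 0^+$), which places $p_2$ in $(p\vee 2_\star^{\textup{rad}},\,2_\star^k)$ and yields the contradiction with $\|v_\eps\|_p\geq c>0$.
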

Given that radially symmetric functions are constant on spheres around the origin, this phenomenon cannot occur in the
radial case.

\subsection{Structure of the paper}
We prove Theorem \ref{thm_LUBounds} in \S\ref{sec:LowerUppper}, and this implies the first part of Theorem \ref{thm_gen}.
We prove Theorem \ref{thm_concentration} in \S\ref{sec_concentration},
and Theorem \ref{thm_roughness} in \S\ref{sec_roughness}.
We recall the classical proof of the second part of Theorem \ref{thm_gen} in Appendix \ref{app_existence} and
establish a technical interpolation result that is needed for the proof of Theorem \ref{thm_LUBounds} in
Appendix \ref{app_interpolation}. Finally,  Appendix \ref{app_exceptionalcases} contains some further considerations regarding the exceptional cases $k\in\{1,d-1\}$.

\subsection{Notation}
We use $X\lesssim Y$ or $Y\gtrsim X$ to denote the estimate $|X|\leq CY$ for an absolute positive constant $C$,  and $X\cong Y$ to denote the estimates $X\lesssim Y \lesssim X$.
We  occasionally allow the implied constant $C$ in the above notation to depend on additional parameters,
which we will indicate by subscripts (unless explicitly omitted); thus for instance $X \lesssim_j Y$ denotes
an estimate of the form $|X| \leq C_jY$ for some $C_j$ depending on $j$.
The indicator function of a set $E$ is denoted by $\textbf{1}_E.$

%%%%%%%%%%%%%%%%%%%
\section{Lower and upper bounds}\label{sec:LowerUppper}

In this section, we prove Theorem \ref{thm_LUBounds}. The following simple result will be useful for both  
lower and   upper bounds.

\begin{lemma}\label{lem_technical}
Let $\eps\in(0,1)$ and let $g_\eps$ be $(s,\gamma)$-admissible for some $s>\frac{d}{d+1}$ and $\gamma>1$.
 Then
\[\int_{\frac12}^{\frac32}\frac{\textup d r}{g_\eps(r)} \cong \eps^{\frac1{\gamma}-1}.\]
\end{lemma}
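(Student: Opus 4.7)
The plan is to use the lower admissibility bound \eqref{eq_A2} directly on the whole interval $[1/2,3/2]$, since for every $r$ in this range we have $|r-1|\leq 1/2$. This immediately gives
\[
\int_{1/2}^{3/2}\frac{\textup d r}{g_\eps(r)} \cong \int_{1/2}^{3/2}\frac{\textup d r}{\eps+|r-1|^\gamma}
= 2\int_0^{1/2}\frac{\textup d u}{\eps+u^\gamma},
\]
where the implied constants depend only on $c,C$ from \eqref{eq_A2}, and where in the last step I substitute $u=|r-1|$ using the symmetry around $r=1$.

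Next I would rescale via $u=\eps^{1/\gamma}v$ to factor out the $\eps$-dependence:
\[
2\int_0^{1/2}\frac{\textup d u}{\eps+u^\gamma}
= 2\eps^{\frac1\gamma-1}\int_0^{\frac12\eps^{-1/\gamma}}\frac{\textup d v}{1+v^\gamma}.
\]
Finally, since $\gamma>1$, the integrand $(1+v^\gamma)^{-1}$ is integrable on $[0,\infty)$, so the remaining integral stays bounded above by $\int_0^\infty (1+v^\gamma)^{-1}\,\textup d v<\infty$ uniformly in $\eps\in(0,1)$. For the matching lower bound, note that as $\eps\to 0^+$ the upper limit $\frac12\eps^{-1/\gamma}$ exceeds $1$, so the integral is at least $\int_0^1 (1+v^\gamma)^{-1}\,\textup d v\geq \frac12$. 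Hence the $v$-integral is $\cong 1$ uniformly for small $\eps>0$, and the claimed asymptotics $\cong \eps^{1/\gamma-1}$ follows.

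There is no real obstacle here: the lemma is purely a computation that exploits the two-sided bound \eqref{eq_A2} together with the integrability of $(1+v^\gamma)^{-1}$ at infinity (which is precisely where the hypothesis $\gamma>1$ enters, and would fail for $\gamma\leq 1$, consistent with the authors' earlier remark that $\gamma\leq 1$ requires a separate analysis).
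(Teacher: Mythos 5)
Your proof is correct and follows essentially the same route as the paper's: apply the two-sided bound \eqref{eq_A2} on $[\frac12,\frac32]$, symmetrize and rescale by $\eps^{1/\gamma}$, and use that $\int_0^\infty(1+v^\gamma)^{-1}\,\d v<\infty$ precisely because $\gamma>1$. The only cosmetic difference is that you spell out the uniform upper and lower bounds on the rescaled integral, which the paper leaves implicit.
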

\begin{proof}
This follows from a simple change of variables and $\gamma>1$:
\[\int_{\frac12}^{\frac32}\frac{\textup d r}{g_\eps(r)}
\stackrel{\eqref{eq_A2}}\cong \int_{\frac12}^{\frac32}\frac{\textup d r}{\eps+|r-1|^\gamma}
=2\eps^{\frac1{\gamma}-1} \int_0^{\frac1{2\eps^{1/\gamma}}} \frac{\d t}{1+t^\gamma}\cong\eps^{\frac1{\gamma}-1}.\qedhere\]
\end{proof}

\subsection{Lower bounds}\label{sec:lower}

We start by addressing the lower bound in \eqref{eq_LBReps}. We decompose an arbitrary $G_k$-symmetric
function $u\in H_k^s$ as
\begin{equation}\label{eq_DecUVW}
u=v+w, \text{ where } \widehat v(\xi)=\textbf{1}_{||\xi|-1|\leq \frac{1}{2}}  \widehat u(\xi) \text{ and }
\widehat w(\xi)=\textbf{1}_{||\xi|-1|>\frac{1}{2}}\widehat u(\xi),
\end{equation}
and observe that $v,w$ are still $G_k$-symmetric.
We then have $\|u\|_p\leq\|v\|_p+\|w\|_p$ and $\textup{q}_\eps(u)= \textup{q}_\eps(v)+\textup{q}_\eps(w)$. It
follows  that
\begin{equation*}
    \frac{\textup{q}_\eps(u)}{\|u\|_p^2}
    \geq\frac{\textup{q}_\eps(v)+\textup{q}_\eps(w)}{(\|v\|_p+\|w\|_p)^2}\geq\frac12\frac{\textup{q}_\eps(v)+\textup{q}_\eps(w)}{\|v\|_p^2+\|w\|_p^2}\geq\frac12 \left(\frac{\textup{q}_\eps(v)}{\|v\|_p^2}\wedge\frac{\textup{q}_\eps(w)}{\|w\|_p^2}\right).
\end{equation*}
Hence it suffices to prove lower bounds for the quotients corresponding to $v$ and $w$ separately. Since $2<
p< 2_s^\star$, Sobolev embedding ensures $H^s\subset L^2\cap L^{2_s^\star}\subset L^p$, and thus
\begin{equation} \label{eq_lowerbound_w}
\|w\|_p^2\lesssim \|w\|_{H^s}^2=\int_{\R^d} |\widehat w(\xi)|^2 (1+|\xi|^2)^s\,\textup d
\xi\cong\int_{\R^d} |\widehat w(\xi)|^2 g_\eps(|\xi|)\,\textup d \xi = \textup q_\eps(w),
\end{equation}
where the $\cong$-estimate holds uniformly with respect to small $\eps>0$ thanks to \eqref{eq_A1}.
The lower bound for $\|w\|_p^{-2} \textup q_\eps(w)$ follows at once, and so we focus on $v$.

\medskip

As in~\cite{LW21} the proof of lower bounds for $\|v\|_p^{-2} \textup q_\eps(v)$  splits into two cases,
according to whether or not $p$ is larger than $2_\star^k$.
If $p\geq 2_\star^k$, then we may use the $G_k$-symmetric Stein--Tomas inequality \eqref{eq_GkSymST} as
follows:
\begin{align*}
    &\|v\|_p^2
    \cong \left\| \int_{\R^d} e^{i\langle\xi,\cdot\rangle} \widehat v(\xi)\,\d\xi\right\|_p^2
    \leq  \left(\int_{\frac12}^{\frac32}  \left\|\int_{\mathbb S^{d-1}} e^{i\langle\omega,r\cdot\rangle} \widehat v(r\omega)\,\textup d\sigma(\omega)\right\|_p  \,\d r\right)^2\label{eq_bigchain}\\
    &\cong\left(\int_{\frac12}^{\frac32} \left\|\int_{\mathbb S^{d-1}} e^{i\langle\omega,\cdot\rangle}
    \widehat v(r\omega)\,\textup d\sigma(\omega)\right\|_p  \,\d r\right)^2 \lesssim \left(\int_{\frac12}^{\frac32} \|\widehat v(r\cdot)\|_{L^2(\mathbb S^{d-1})} \,\d r\right)^2\notag\\
    &= \left(\int_{\frac12}^{\frac32}  \|\widehat v(r\cdot)\|_{L^2(\mathbb S^{d-1})}^2g_\eps(r) \,\d r
    \right)\left(\int_{\frac12}^{\frac32}  \frac{\d r}{g_\eps(r)}\right)\lesssim \left( \int_{\R^d}  |\widehat v(\xi)|^2 g_\eps(|\xi|)\, \d \xi \right) \left(\int_{\frac12}^{\frac32} \frac{\d r}{g_\eps(r)}\right).\notag
\end{align*}
By Lemma \ref{lem_technical}, we then conclude
\begin{equation}\label{eq_UBsuperP}
\|v\|_p^2 \lesssim\textup{q}_\eps(v) \int_{\frac12}^{\frac32}\frac{\d r}{g_\eps(r)}\cong\eps^{\frac1{\gamma}-1}\textup{q}_\eps(v),
\end{equation}
which implies the claimed lower bound. If $2<p<2_\star^k$, then the interpolation parameter from
\eqref{eq_alphak} satisfies $\alpha_k\in(0,1)$, and $\|v\|_p \leq \|v\|_2^{1-\alpha_k}
\|v\|_{2_\star^k}^{\alpha_k}$. Plancherel's identity, the Fourier support assumption on $v$ and the two-sided
estimate \eqref{eq_A2} lead to
\begin{equation}\label{eq_L2BD}
\|v\|_2^2=\int_{\R^d} |\widehat v(\xi)|^2\,\textup d \xi \lesssim \eps^{-1} \int_{\R^d} |\widehat v(\xi)|^2 g_\eps(|\xi|)\,\textup d \xi =\eps^{-1} \textup q_\eps(v).
\end{equation}
From \eqref{eq_UBsuperP} with $p=2_\star^k$ and \eqref{eq_L2BD}, it then follows that
\begin{equation}\label{eq_UBvInterpol}
    \|v\|_p^2\leq  (\|v\|_2^2)^{1-\alpha_k} (\|v\|_{2_\star^k}^2)^{\alpha_k} \lesssim (\eps^{-1} \textup q_\eps(v))^{1-\alpha_k} (\eps^{\frac1{\gamma}-1}\textup{q}_\eps(v))^{\alpha_k}=\eps^{\frac{\alpha_k}{\gamma}-1}\textup{q}_\eps(v).
\end{equation}
This concludes the proof of \eqref{eq_LBReps}.
The lower bounds in \eqref{eq_LBReps2} are proved analogously given that
it suffices to replace the exponent $2_\star^k$ originating from the $G_k$-symmetric Stein--Tomas
inequality by the exponent $2_\star=2_\star^1$ coming from the classical Stein--Tomas inequality. 
 We omit the details.
(See~\cite[\S4]{LW21} for a proof in the special case $s=\gamma=2$.)

\medskip

We now  verify the lower bound in \eqref{eq_LBReps3}.
Using the decomposition \eqref{eq_DecUVW} and reasoning as in \eqref{eq_lowerbound_w}, it suffices to
consider  a radial function $v\in H_{\textup{rad}}^s$ whose Fourier support is contained in the spherical
shell $\{\xi\in\R^d: \frac12\leq |\xi| \leq \frac32\}$. By Fourier inversion, \[v(x)=_d  |x|^{\frac {2-d}2}
\int_{\frac12}^{\frac32} \widehat v(r)J_{\frac {d-2}2}(r|x|) r^{\frac d2}\,\textup d r,\] where $\widehat v(r)=\widehat v(r\omega)$ for $r>0$ and almost every $\omega\in\mathbb{S}^{d-1}$. The appearance of the
Bessel function is due to
\begin{equation}\label{eq_sigmahat}
    \widehat\sigma(\xi)=_d |\xi|^{\frac {2-d}2} J_{\frac{d-2}2}(|\xi|).
\end{equation}
Standard asymptotics for Bessel functions at zero and at infinity lead to the pointwise estimate
\begin{align*}
|v(x)|
&\lesssim_d (1+|x|)^{\frac{1-d}2} \int_{\frac12}^{\frac32} |\widehat v(r)|\,\d r \\
&\leq (1+|x|)^{\frac{1-d}2} \left( \int_{\frac12}^{\frac32} \frac{\d r}{\eps+|r-1|^\gamma}\right)^{\frac12}
 \left( \int_{\frac12}^{\frac32} (\eps+|r-1|^\gamma) |\widehat v(r)|^2 \d r \right)^{\frac12}\\
&\cong (1+|x|)^{\frac{1-d}2} \eps^{\frac{1}{2}(\frac{1}{\gamma}-1)} \sqrt{\textup
q_\eps(v)},
\end{align*}
where we used the Cauchy--Schwarz inequality,  Lemma \ref{lem_technical}, and assumption \eqref{eq_A2} on $g_\eps$. 
Recall that $2_\star^{\textup{rad}}=\frac{2d}{d-1}$. We thus infer
\begin{equation}\label{eq_ToInterpolate}
\|v\|_p+\|v\|_{2_\star^{\textup{rad}},\infty} \lesssim \eps^{\frac{1}{2}(\frac{1}{\gamma}-1)}\sqrt{\textup q_\eps(v)},\text{ for every }p>2_\star^{\textup{rad}},
\end{equation}
which implies the lower bound \eqref{eq_LBReps3} in the range $p>2_\star^{\textup{rad}}$.
If $2<p<2_\star^{\textup{rad}}$, then real interpolation between \eqref{eq_ToInterpolate} and the simple $L^2$-bound
$\|v\|_2^2\lesssim \eps^{-1} \textup q_\eps(v)$ from \eqref{eq_L2BD}
yields
\[\|v\|_p^2
\lesssim (\|v\|_2^2)^{{1-\alpha_{\textup{rad}}}} (\|v\|_{2_\star^{\textup{rad}},\infty}^2)^{\alpha_{\textup{rad}}}
\lesssim \eps^{\frac{\alpha_{\textup{rad}}}{\gamma}-1}\textup q_\eps(v);\]
see \cite[Prop.\@ 1.1.14]{Gr14}.
Finally, the critical case $p=2_\star^{\textup{rad}}$ is a consequence of  Proposition \ref{prop:interpolate} with parameters
\[(r,q,C_1,C_2)=\left(2,2_\star^{\textup{rad}},(\eps^{-1}\textup
q_\eps(v))^{\frac12},(\eps^{\frac1{\gamma}-1}\textup q_\eps(v))^{\frac12}\right),\]
as follows: if $\eps>0$  is sufficiently small, then
$$
  \|v\|_{2_\star^{\textup{rad}}}^2
  \lesssim C_2^2 (1+\log_+(C_1/C_2))^{\frac{2}{2_\star^{\textup{rad}}}}
  \cong  \eps^{\frac1{\gamma}-1} |\log(\eps)|^{\frac{d-1}{d}} \textup q_\eps(v).
$$
This concludes the verification of the lower bounds in Theorem \ref{thm_LUBounds}.

\subsection{Upper bounds}\label{sec:upper}

We first consider the upper bound in \eqref{eq_LBReps} for $G_k$-symmetric functions.
Given sufficiently small $\eps,\delta>0$, define the $G_k$-symmetric trial function $v_{\eps,\delta}$ via its Fourier transform as
\begin{equation}\label{eq_FunctionV}
\widehat{v}_{\eps,\delta}(\xi) := 
\textbf{1}_{||\xi|-1|\leq m_{\eps,\delta}} \cdot \textbf{1}_{\mathcal
C_\delta^k}\left(|\xi|^{-1}{\xi}\right) \cdot a_\eps(||\xi|-1|).
\end{equation}
Here, both $m_{\eps,\delta}>0$ and the profile function $a_\eps$ will be chosen below, and $\mathcal
C_\delta^k\subset\mathbb S^{d-1}$ is a union of two spherical caps of radius $\delta$ defined as follows:
\[\mathcal C_\delta^k:=\{(\eta,\zeta)\in\R^{d-k}\times\R^{k}: |\eta|^2+|\zeta|^2=1, |\eta|<\delta\}.\]
The set $\mathcal C_\delta^k$ is $G_k$-symmetric and has surface area $\sigma(\mathcal C_\delta^k)=_d
\delta^{d-k}.$ Our estimates for $v_{\eps,\delta}$ make use of the following auxiliary result.

\begin{proposition} \label{prop:pointwiselowerbound}
  Let $d\geq 2$ and $k\in\{1,\ldots,d-1\}$. 
  There exist $c_0,c_1,c_2>0$ with the following property: for
  every $(m,\delta)\in (0,\frac{1}{2})^2$,   
 there exist disjoint measurable sets $E_j\subset\R^d, j\in\{1,\ldots, \lfloor
  \frac{c_0}{\delta^2+m}\rfloor\}$, satisfying $|E_j|\geq c_1\delta^{k-d} j^{k-1}$ and 
  $$
    \int_{\mathbb S^{d-1}}  e^{irx\cdot\omega} {\bf 1}_{\mathcal
    C_\delta^k}(\omega)\,\d\sigma(\omega) 
    \geq c_2 \delta^{d-k} j^{\frac{1-k}{2}},
  $$
 whenever  $|r-1|\leq m$ and $x\in E_j$.
\end{proposition}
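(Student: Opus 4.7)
The approach is to build $G_k$-symmetric product shells $E_j$ on which the phase of the oscillatory integral remains coherent, combining the explicit Bessel asymptotic for the Fourier transform of surface measure on $\mathbb S^{k-1}$ with a direct bound on the transversal $(d-k)$-dimensional integral. Parametrizing $\mathcal{C}_\delta^k$ via $\omega=(u,\sqrt{1-|u|^2}\hat\zeta)$ with $(u,\hat\zeta)\in B_\delta^{d-k}\times\mathbb S^{k-1}$, one computes $d\sigma(\omega)=\phi(u)\,du\,d\sigma_{\mathbb S^{k-1}}(\hat\zeta)$ with $\phi(u)\cong 1$ uniformly for $|u|<\delta<\tfrac12$. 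Writing $x=(x',x'')\in\R^{d-k}\times\R^k$ and exploiting the $O(k)$-invariance of $\sigma_{\mathbb S^{k-1}}$, the $\hat\zeta$-integration reduces the expression to
\[\int_{\mathcal{C}_\delta^k}e^{irx\cdot\omega}\,d\sigma(\omega)=\int_{|u|<\delta}e^{irx'\cdot u}\,\widehat{\sigma_{\mathbb S^{k-1}}}\!\bigl(r\sqrt{1-|u|^2}\,x''\bigr)\phi(u)\,du,\]
which is real-valued by the antipodal symmetry of $\mathcal{C}_\delta^k$.

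Next, I would invoke the Bessel asymptotic $\widehat{\sigma_{\mathbb S^{k-1}}}(y)=c_k|y|^{(1-k)/2}\cos\!\bigl(|y|-\tfrac{(k-1)\pi}{4}\bigr)+O(|y|^{-(k+1)/2})$ to select shifts $A_j\cong 2\pi j$ and a small absolute constant $b>0$ such that $\widehat{\sigma_{\mathbb S^{k-1}}}(y)$ has fixed sign and magnitude $\cong j^{(1-k)/2}$ on the enlarged window $[A_j-1,A_j+b+1]$. Then define
\[E_j:=\bigl\{(x',x'')\in\R^{d-k}\times\R^k:|x'|\leq c/\delta,\ |x''|\in[A_j,A_j+b]\bigr\}.\]
Pairwise disjointness follows from $A_{j+1}-A_j>b$, and a volume computation in polar coordinates in $\R^k$ yields $|E_j|\cong\delta^{k-d}j^{k-1}$. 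For $x\in E_j$ and $|r-1|\leq m$, two coherence conditions close the estimate: the Bessel argument satisfies $\bigl|r\sqrt{1-|u|^2}|x''|-|x''|\bigr|\lesssim(m+\delta^2)j$, which remains uniformly bounded by $1$ provided $j\leq c_0(m+\delta^2)^{-1}$ for a small absolute $c_0$; and $|rx'\cdot u|\leq(1+m)(c/\delta)\delta\leq 2c\ll 1$, so that $\cos(rx'\cdot u)\geq\tfrac12$. Both factors in the reduced integral then have definite sign, producing the pointwise lower bound $\gtrsim\delta^{d-k}j^{(1-k)/2}$.

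The main obstacle I anticipate is the small-$j$ regime, where the $O(|y|^{-(k+1)/2})$ remainder in the Bessel expansion need not be dominated by the main term; this is addressed by choosing $A_j$ near specific positive local maxima of $\widehat{\sigma_{\mathbb S^{k-1}}}$ for the finitely many offending indices and absorbing them into the constants, with the case $k=1$ being immediate from $\widehat{\sigma_{\mathbb S^0}}(y)=2\cos(y)$.
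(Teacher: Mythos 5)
Your proposal is correct and follows essentially the same route as the paper: both reduce the cap integral to the one-dimensional Bessel asymptotics of $\widehat{\sigma_{\mathbb S^{k-1}}}$ in the $\R^k$-variable, localize $|x''|$ in shells of fixed width around the $j$-th (positive) maximum, keep the transversal phase coherent by restricting $|x'|\lesssim\delta^{-1}$, and obtain the constraint $j\lesssim(\delta^2+m)^{-1}$ from the $(1+O(m+\delta^2))$-perturbation of the Bessel argument at radius $\cong j$. The only cosmetic difference is that the paper also converts the transversal $(d-k)$-dimensional integral into a normalized Bessel factor $(\rho r|y|)^{(2-d+k)/2}J_{(d-k-2)/2}(\rho r|y|)$ (via \cite[Eq.~(6.3)]{MOS21}) and phrases the shells multiplicatively around the local maxima $z_j$ of $J_{(k-2)/2}$, whereas you keep the ball integral and use additive windows; both yield the same volume bound $|E_j|\gtrsim\delta^{k-d}j^{k-1}$.
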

\begin{proof}
  If $k\in\{2,\ldots,d-2\}$, then this result is a variant of the computations from the proof of
  \cite[Theorem 1.3]{MOS21}, which we recall for the reader's convenience.  The starting point is the
  formula 
  \begin{align} \label{eq:Ck}
    \begin{aligned}
    \widehat{\textbf{1}_{\mathcal C^k_\delta}\sigma}(rx)  \cong 
    \int_0^\delta &\rho^{d-k-1} (1-\rho^2)^{\frac{k-2}2}\times \\
    &  (\rho r|y|)^{\frac{2-d+k}2}J_{\frac{d-k-2}2}(\rho r|y|) \cdot (\sqrt{1-\rho^2}r|z|)^{\frac{2-k}2}
    J_{\frac{k-2}2}(\sqrt{1-\rho^2}r|z|)\d \rho
    \end{aligned}
  \end{align}
  where $x=(y,z)\in\R^{d-k}\times\R^k$ and $|r-1|\leq m$; see~\cite[Eq.~(6.3)]{MOS21}.
  There  this quantity is  estimated from below by $c_1 \delta^{d-k} j^{\frac{1-k}{2}}$, provided 
  $x=(y,z)$ belongs to the set 
  \[ 
    \tilde E_{j}(r):=\left\{(y,z)\in\R^{d-k}\times\R^k: 0\leq r|y|\leq
    c\delta^{-1},\frac{z_j-c}{\sqrt{1-\delta^2}}\leq r|z|\leq z_j+c\right\}
  \]
  for some sufficiently small absolute constant $c>0$ satisfying  $c<\frac14\inf\{z_{j+1}-z_j: j\in\N\}$.
  Here, $\{z_j\}_{j\geq 1}$ denotes the increasing sequence of local maxima of the Bessel function $J_{(k-2)/2}$,
  which satisfies $z_j=2\pi j+O(1)$ as $j\to\infty$.
   For every $r\in [1-m,1+m]$ and $m\in(0,\frac12)$, we have $E_j\subset\tilde E_j(r)$, where  
  \[ 
    E_{j}:=\left\{(y,z)\in\R^{d-k}\times\R^k: 0\leq |y|\leq
    \frac{c}{\delta (1+m)},\frac{z_j-c}{\sqrt{1-\delta^2}(1-m)}\leq |z|\leq \frac{z_j+c}{1+m}\right
    \}.
  \]
  To bound the measure of $E_j$ from below,  choose 
  $\alpha,\beta>0$ such that the estimates
$$
   \left(\frac{z_j-c}{z_j+c}\right)^k \leq 1-\frac{\alpha}{j},\qquad  
   \left(\frac{1+m}{\sqrt{1-\delta^2}(1-m)}\right)^k
   \leq  1+\beta (\delta^2+m)
$$
hold for $(\delta,m)\in (0,\frac{1}{2})^2$ and all $j\in\N$.  We then find, for $j= 1, \ldots,
\lfloor\frac{c_0}{\delta^2+m}\rfloor$ with $c_0 := \frac{\alpha}{2\beta}$,
\begin{align*}
  |E_j|
  &\gtrsim 
  \left(\frac{c}{\delta(1+m)}\right)^{d-k}   
  \left[ \left(\frac{z_j+c}{1+m}\right)^k  - \left(\frac{z_j-c}{\sqrt{1-\delta}(1-m)}\right)^k
  \right] \\
  &\gtrsim \delta^{k-d} j^k 
  \left[ 1 - \left(\frac{z_j-c}{z_j+c}\right)^k \left(\frac{1+m}{\sqrt{1-\delta^2}(1-m)}\right)^k
  \right] \\
  &\gtrsim \delta^{k-d} j^k 
  \left[ 1 - (1-\frac{\alpha}{j})\cdot (1+\beta (\delta^2+m)) 
  \right] \\
  &\gtrsim  \delta^{k-d} j^k 
  \left[  \frac{\alpha}{j} -  \beta (\delta^2+m) 
  \right] \\
  &\gtrsim \delta^{k-d} j^{k-1}. 
\end{align*}
  This finishes the proof for $k\in\{2,\ldots,d-2\}$. Formula~\eqref{eq:Ck} continues to hold in the case $k\in\{1,d-1\}$ by taking into account the fact that 
  $$
    J_{-\frac{1}{2}}(z) = \sqrt{\frac{2}{\pi z}}  \cos(z) .
  $$ 
  The same proof, with $z_j$ replaced by $2\pi j$, then yields the result.
\end{proof}

\begin{lemma}\label{lem_UpperBoundsPrep}
Assume $2< p<\frac{2k}{k-1}$ and let $a_\eps\in L^2_{\textup{loc}}(\R_+)$ be nonnegative.
Then, for all sufficiently small $\delta,\eps,m_{\eps,\delta}>0$ and $v_{\eps,\delta}$ as defined in
\eqref{eq_FunctionV},
\begin{align}
\textup q_\eps(v_{\eps,\delta})&\cong \delta^{d-k} \int_0^{m_{\eps,\delta}} a_\eps^2(s) (\eps+s^\gamma)\,\d s,\label{eq_qV}\\
\|v_{\eps,\delta}\|_p&\gtrsim \delta^{\frac{d-k}{p'}} (\delta^2+m_{\eps,\delta})^{\frac{k-1}2-\frac kp} \int_0^{m_{\eps,\delta}} a_\eps(s)\,\d s.\label{eq_vLp}
\end{align}
\end{lemma}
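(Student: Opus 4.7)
The plan is to evaluate both quantities explicitly using polar coordinates $\xi = r\omega$ and then invoke Proposition \ref{prop:pointwiselowerbound} for the pointwise lower bound needed in \eqref{eq_vLp}.

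For \eqref{eq_qV}, I would exploit the product structure of $\widehat v_{\eps,\delta}$ to write
\[
\textup q_\eps(v_{\eps,\delta}) = \int_0^\infty r^{d-1} g_\eps(r) \,\mathbf 1_{|r-1|\leq m_{\eps,\delta}} \,a_\eps^2(|r-1|)\,\sigma(\mathcal C_\delta^k)\,\d r.
\]
Since $\sigma(\mathcal C_\delta^k)\cong\delta^{d-k}$, $r^{d-1}\cong 1$ on the support, and $g_\eps(r)\cong \eps+|r-1|^\gamma$ by \eqref{eq_A2}, the change of variables $s=|r-1|$ immediately yields the claim.

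For \eqref{eq_vLp} I would invert the Fourier transform in polar coordinates, obtaining
\[
v_{\eps,\delta}(x) \cong \int_{|r-1|\leq m_{\eps,\delta}} r^{d-1}a_\eps(|r-1|)\!\left(\int_{\mathbb S^{d-1}} \mathbf 1_{\mathcal C_\delta^k}(\omega)e^{irx\cdot\omega}\,\d\sigma(\omega)\right)\d r.
\]
The set $\mathcal C_\delta^k$ is invariant under $\omega\mapsto-\omega$, so the inner integral is real-valued. Proposition \ref{prop:pointwiselowerbound}, applied with $m = m_{\eps,\delta}$, then produces disjoint sets $E_1,\dots,E_J$ with $J=\lfloor c_0/(\delta^2+m_{\eps,\delta})\rfloor$, $|E_j|\gtrsim\delta^{k-d}j^{k-1}$, on which the inner integral is uniformly bounded below by $c_2\delta^{d-k}j^{(1-k)/2}$ for every $|r-1|\leq m_{\eps,\delta}$. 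Since $a_\eps\geq 0$ and $r^{d-1}\cong 1$, I obtain the pointwise lower bound
\[
v_{\eps,\delta}(x) \gtrsim \delta^{d-k}\,j^{(1-k)/2} \int_0^{m_{\eps,\delta}} a_\eps(s)\,\d s,\qquad x\in E_j.
\]

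Raising to the $p$-th power, using disjointness and the measure bound on $E_j$, gives
\[
\|v_{\eps,\delta}\|_p^p \gtrsim \delta^{(d-k)(p-1)}\left(\int_0^{m_{\eps,\delta}} a_\eps(s)\,\d s\right)^p \sum_{j=1}^{J} j^{k-1+(1-k)p/2}.
\]
The main obstacle, and the place where the hypothesis $p<\frac{2k}{k-1}$ enters, is the asymptotic evaluation of this sum: the exponent $k-1+(1-k)p/2>-1$ is equivalent to $p<\frac{2k}{k-1}$, under which the partial sum is of order $J^{\,k-(k-1)p/2}\cong(\delta^2+m_{\eps,\delta})^{(k-1)p/2-k}$ (with the degenerate case $k=1$ trivially giving $J$). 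Taking the $p$-th root and using $(p-1)/p=1/p'$ yields the desired bound \eqref{eq_vLp}.
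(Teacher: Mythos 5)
Your proposal is correct and follows essentially the same route as the paper: polar coordinates plus \eqref{eq_A2} for \eqref{eq_qV}, and Fourier inversion combined with Proposition~\ref{prop:pointwiselowerbound} on the disjoint sets $E_j$ for \eqref{eq_vLp}, with the condition $p<\frac{2k}{k-1}$ entering exactly where you say, in the evaluation of $\sum_j j^{k-1+(1-k)p/2}\cong J^{k-(k-1)p/2}$. The remark that the inner spherical integral is real by the antipodal symmetry of $\mathcal C_\delta^k$ is a harmless clarification of what the Proposition already implicitly asserts.
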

\begin{proof}
We alleviate notation by writing $m=m_{\eps,\delta}$ and $v=v_{\eps,\delta}$.
Estimate \eqref{eq_qV} is a simple consequence of first passing to polar coordinates in Fourier
space, and then applying the two-sided inequality \eqref{eq_A2} together with the change of variables $s=r-1$:
\[\textup q_\eps(v)=  \int_{\R^d} |\widehat v(\xi)|^2 g_\eps(|\xi|)\,\d\xi
=\sigma(\mathcal C_\delta^k) \int_{1-m}^{1+m}  a_\eps^2(|r-1|) g_\eps(r)\,\d r
\cong\delta^{d-k}\int_0^m a_\eps^2(s) (\eps+s^\gamma)\,\d s.\]
Estimate \eqref{eq_vLp} stems from the following identity, obtained via Fourier inversion:
\begin{equation}\label{eq_InnerOsc}
v(x)=_d\int_{1-m}^{1+m} a_\eps(|r-1|) \left( \int_{\mathbb S^{d-1}}  e^{irx\cdot\omega} \textbf{1}_{\mathcal
C_\delta^k}(\omega)\,\d\sigma(\omega)\right) r^{d-1}\,\d r.
\end{equation}
Choosing  $c_0>0$ and $E_j\subset\R^d$ as in Proposition~\ref{prop:pointwiselowerbound} we obtain, for
$j\in\{1,\ldots,\lfloor \frac{c_0}{\delta^2+m}\rfloor\}$,
\begin{equation} \label{eq_lowerbound1}
  |v(x)| \gtrsim\delta^{d-k}j^{\frac{1-k}2} \int_0^m a_\eps(s)\,\d s, \text{ for every } x\in E_j.
\end{equation}
Using that $|E_j|\gtrsim \delta^{k-d} j^{k-1}$, we thus obtain for $2< p<\frac{2k}{k-1}$: 
\begin{align*}
    \|v\|_p
    &\stackrel{\eqref{eq_lowerbound1}}\gtrsim  \delta^{d-k}\left(\sum_{j=1}^{\lfloor \frac
    {c_0}{\delta^2+m}\rfloor} (j^{\frac{1-k}2})^p |E_j|\right)^{\frac1p} \int_0^m a_\eps(s)\,\d s\\
    &\gtrsim  \delta^{d-k}\left(\sum_{j=1}^{\lfloor \frac
    {c_0}{\delta^2+m}\rfloor} j^{\frac{1-k}2 (p-2)} \delta^{k-d}\right)^{\frac1p} \int_0^m a_\eps(s)\,\d s\\
    &\cong  \delta^{\frac{d-k}{p'}} (\delta^2+m)^{\frac{k-1}2-\frac kp} \int_0^m a_\eps(s)\,\d s,
\end{align*}
which is all we had to show. Note that we used $p<\frac{2k}{k-1}$ in the last estimate.
\end{proof}

In the radial case, the following substitute holds for the simpler trial function given by
\begin{equation}\label{eq_FunctionW}
  \widehat v_\eps(\xi):= \textbf{1}_{||\xi|-1|\leq m_\eps}  \cdot a_\eps(||\xi|-1|).
\end{equation}

\begin{lemma}\label{lem_radial}
Let $a_\eps\in L^2_{\textup{loc}}(\R_+)$ be nonnegative. Then, for all sufficiently small $\eps,m_{\eps}>0$ and
$v_{\eps}$ as defined in \eqref{eq_FunctionW},
\begin{align}
q_\eps(v_{\eps})&\cong \int_0^{m_{\eps}} a_\eps^2(s) (\eps+s^\gamma)\,\d s,\label{eq_qWEst}\\
\|v_{\eps}\|_p&\gtrsim  \int_0^{m_{\eps}} a_\eps(s)\,\d s\cdot  \left\{ \begin{array}{ll}
 m_{\eps}^{\frac{d-1}2-\frac dp}, & \textrm{if $p\in (2,2_\star^{\textup{rad}})$,}\\
|\log(m_\eps)|^{\frac1p}, & \textrm{if $p=2_\star^{\textup{rad}}$,}\\
1, & \textrm{if $p\in(2_\star^{\textup{rad}},\infty)$.}
\end{array} \right.\label{eq_WpEst}
\end{align}
\end{lemma}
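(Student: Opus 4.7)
The estimate \eqref{eq_qWEst} is immediate and mirrors the argument for \eqref{eq_qV} in Lemma~\ref{lem_UpperBoundsPrep}: passing to polar coordinates in Fourier space and applying the two-sided bound \eqref{eq_A2} together with $r^{d-1}\cong 1$ for $r$ near $1$ yields $\textup{q}_\eps(v_\eps)\cong\int_0^{m_\eps}a_\eps^2(s)(\eps+s^\gamma)\,\d s$, the only difference being that the factor $\sigma(\mathcal C_\delta^k)=_d\delta^{d-k}$ is replaced by $\sigma(\mathbb S^{d-1})=_d 1$. The real content of the lemma is therefore the $L^p$ lower bound \eqref{eq_WpEst}.

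For \eqref{eq_WpEst}, I plan to use Fourier inversion and radial symmetry to write
\[v_\eps(x)=_d\int_{1-m_\eps}^{1+m_\eps}a_\eps(|r-1|)\,\widehat\sigma(rx)\,r^{d-1}\,\d r,\]
where $\widehat\sigma(\xi)=_d|\xi|^{(2-d)/2}J_{(d-2)/2}(|\xi|)$ by~\eqref{eq_sigmahat}, and then to follow the strategy of Proposition~\ref{prop:pointwiselowerbound} with the full sphere playing the role of the cap $\mathcal C_\delta^k$. Let $\{z_j\}_{j\geq 1}$ enumerate the positive local maxima of $J_{(d-2)/2}$, so $z_j=2\pi j+O(1)$, and fix an absolute constant $c>0$ smaller than $\tfrac14\inf_j(z_{j+1}-z_j)$. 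The classical Hankel asymptotics give $\widehat\sigma(\xi)\gtrsim|\xi|^{(1-d)/2}$ whenever $||\xi|-z_j|\leq c$. Define the pairwise disjoint annuli
\[E_j:=\Bigl\{x\in\R^d:\frac{z_j-c}{1-m_\eps}\leq|x|\leq\frac{z_j+c}{1+m_\eps}\Bigr\},\qquad 1\leq j\leq \lfloor c_0/m_\eps\rfloor,\]
for a suitably small $c_0>0$; this choice guarantees $r|x|\in[z_j-c,z_j+c]$ for every $r\in[1-m_\eps,1+m_\eps]$ and every $x\in E_j$. Consequently $\widehat\sigma(rx)\gtrsim z_j^{(1-d)/2}\cong j^{(1-d)/2}$ uniformly in such $r$, and an elementary volume computation gives $|E_j|\gtrsim j^{d-1}$, so that
\[|v_\eps(x)|\gtrsim j^{(1-d)/2}\int_0^{m_\eps}a_\eps(s)\,\d s,\qquad x\in E_j.\]

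Summing $|v_\eps|^p$ over the disjoint annuli then produces
\[\|v_\eps\|_p^p\gtrsim\Bigl(\int_0^{m_\eps}a_\eps(s)\,\d s\Bigr)^{\!p}\sum_{j=1}^{\lfloor c_0/m_\eps\rfloor}j^{(d-1)(1-p/2)}.\]
The exponent $(d-1)(1-p/2)$ equals $-1$ precisely when $p=2d/(d-1)=2_\star^{\textup{rad}}$, and the three regimes of \eqref{eq_WpEst} fall out from the convergence behaviour of this sum: it is $\cong 1$ if $p>2_\star^{\textup{rad}}$, $\cong|\log m_\eps|$ if $p=2_\star^{\textup{rad}}$, and $\cong m_\eps^{-(1+(d-1)(1-p/2))}=m_\eps^{(d-1)p/2-d}$ if $p<2_\star^{\textup{rad}}$. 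Taking $p$-th roots delivers the three factors $m_\eps^{(d-1)/2-d/p}$, $|\log m_\eps|^{1/p}$, and $1$, respectively.

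The main technical point is the calibration of the windows $|r|x|-z_j|\leq c$ so that $\widehat\sigma(rx)$ keeps a definite sign with the same quantitative lower bound as $r$ sweeps $[1-m_\eps,1+m_\eps]$; this is exactly the mechanism already used in Proposition~\ref{prop:pointwiselowerbound}, and the regular spacing of the $z_j$'s from Hankel asymptotics makes the choice of $c$ and $c_0$ essentially automatic. Everything else reduces to the three elementary series estimates above.
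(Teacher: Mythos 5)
Your proposal is correct and follows essentially the same route as the paper: the representation of $v_\eps$ via $\widehat\sigma(\xi)=_d|\xi|^{(2-d)/2}J_{(d-2)/2}(|\xi|)$, the annuli $E_j$ around the local maxima $z_j$ of the Bessel function with $|E_j|\gtrsim j^{d-1}$ for $j\lesssim m_\eps^{-1}$, and the trichotomy from the convergence of $\sum_j j^{(d-1)(1-p/2)}$. Your explicit remark that $\widehat\sigma(r x)$ must keep a definite sign on the calibrated windows (so the nonnegativity of $a_\eps$ yields the pointwise lower bound without cancellation) is a point the paper leaves implicit, and is a welcome clarification.
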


\begin{proof}
    The proof is analogous to that of Lemma \ref{lem_UpperBoundsPrep}, so we just highlight the differences.
    Write $m=m_\eps$ and $v=v_\eps$.
    The lower bound \eqref{eq_WpEst} relies on \eqref{eq_sigmahat} for the representation
    \[v(x) =_d |x|^{\frac{2-d}2}\int_{1-m}^{1+m} a_\eps(|r-1|)  J_{\frac{d-2}2}(r|x|) r^{\frac{d}2}\,\d
    r.\] Let again $z_j=2\pi j+O(1)$, as $j\to\infty$, denote the sequence of local maxima of
    $J_{\frac{d-2}2}$.
    With $c>0$  as in the proof of Proposition \ref{prop:pointwiselowerbound}, it holds that
    \[|J_{\frac{d-2}2}(r|x|)|\gtrsim j^{-1/2},\text{ if }|r-1|\leq m \text{ and }
    \frac{z_j-c}{1-m}\leq |x|\leq \frac{z_j+c}{1+m}.\] For such $x$, we get the pointwise lower bound
    \[|v(x)|\gtrsim j^{\frac{1-d}2} \int_0^m a_\eps(s)\,\d s.\]
    Arguing as above, we find that this lower bound holds for $x$ belonging to annular regions
    $E_j\subset\R^d$ satisfying 
    $$
      |E_j| 
      \gtrsim \left(\frac{z_j+c}{1+m}\right)^d - \left(\frac{z_j-c}{1-m}\right)^d
      \gtrsim j^{d-1}  
    $$
    provided that $j\in\{1,2,\ldots,\lfloor \frac{c_0}m\rfloor\}$ for some suitably small $c_0>0$.
    Integrating these estimates and summing up the resulting bounds yields
    $$
      \|v\|_p
      \gtrsim \left(\sum_{j=1}^{\lfloor
       \frac{c_0}m\rfloor} j^{\frac{p(1-d)}{2}} |E_j| \right)^{\frac{1}{p}}
      \int_0^m a_\eps(s)\,\d s
      \gtrsim \left(\sum_{j=1}^{\lfloor
       \frac{c_0}m\rfloor} j^{\frac{(p-2)(1-d)}{2}} \right)^{\frac{1}{p}}
      \int_0^m a_\eps(s)\,\d s.
    $$
    The three cases in \eqref{eq_WpEst} correspond to the exponent $\frac{(p-2)(1-d)}{2}$ being larger than, equal to, or
    smaller than $-1$.
\end{proof}

\begin{proof}[Proof of Theorem~\ref{thm_LUBounds} (Upper bounds)]
 {\bf Radial case \eqref{eq_LBReps3}.}
By Lemma \ref{lem_radial}, we have
\begin{align*}
    \textup R_\varepsilon^{\textup{rad}}(p)
    \lesssim
    \frac{\int_0^{m_\eps} a_\eps^2(s)(\eps+s^\gamma)\,\d s}{\left(\int_0^{m_\eps} a_\eps(s)\,\d s\right)^2}\times\left\{ \begin{array}{ll}
 m_{\eps}^{1-d+\frac {2d}p}, & \textrm{if $p\in(2,2_\star^{\textup{rad}})$,}\\
|\log(m_\eps)|^{-\frac2p}, & \textrm{if $p=2_\star^{\textup{rad}}$,}\\
1, & \textrm{if $p\in(2_\star^{\textup{rad}},\infty)$.}
\end{array} \right.
\end{align*}
Taking $a_\eps(s)=(\eps+s^\gamma)^{-1}$, we find that the first factor simplifies to
$\left(\int_0^{m_\eps} (\eps+s^\gamma)^{-1}\d s\right)^{-1}$.
Choosing  $m_\eps:=\eps^{\frac1{\gamma}}$, we then have
\begin{align*}
    \textup R_\varepsilon^{\textup{rad}}(p)
    &\lesssim
    \left\{ \begin{array}{ll}
 \eps^{1-\frac{1}{\gamma}} \cdot \eps^{\frac{1}{\gamma}(1-d+\frac {2d}p)}, & \textrm{if
 $p\in(2,2_\star^{\textup{rad}})$,}\\
\eps^{1-\frac{1}{\gamma}} \cdot |\log(\eps)|^{-\frac2p}, & \textrm{if $p=2_\star^{\textup{rad}}$,}\\
\eps^{1-\frac{1}{\gamma}}, & \textrm{if $p\in(2_\star^{\textup{rad}},\infty)$}
\end{array} \right. \\
   &\cong
    \left\{ \begin{array}{ll}
 \eps^{1- \frac{\alpha_{\textup{rad}}}{\gamma}}, &
 \textrm{if $p\in(2,2_\star^{\textup{rad}})$,}\\
\eps^{1-\frac{1}{\gamma}} |\log(\eps)|^{-\frac{d-1}{d}}, & \textrm{if $p=2_\star^{\textup{rad}}$,}\\
\eps^{1-\frac{1}{\gamma}}, & \textrm{if $p\in(2_\star^{\textup{rad}},\infty)$.}
\end{array} \right.
\end{align*}

\medskip

\noindent{\bf $G_k$-symmetric case~\eqref{eq_LBReps}.}
Since $\textup{R}_\eps^k(p)=\textup{R}_\eps^{d-k}(p)$ it suffices to prove the claimed  
inequality  for $k\leq \lfloor d/2\rfloor$, so \eqref{eq_alphak} yields $\alpha_k=(d+k)(\frac12-\frac1p)$.
If $p>2_\star^k$, then $p>2_\star^{\textup{rad}}$ and thus
\[\textup R_\varepsilon^k(p)
\leq  \textup R_\varepsilon^{\textup{rad}}(p)
\lesssim  \eps^{1-\frac{1}{\gamma}},
\]
as claimed. Thus we may assume $2<p\leq2_\star^k$, which in particular forces $p<\frac{2k}{k-1}$.
So Lemma~\ref{lem_UpperBoundsPrep} applies and the choices $a_\eps(s)=(\eps+s^\gamma)^{-1}$ and
$m_{\eps,\delta}=\delta^2$ lead to
\begin{align*}
   \textup R_\varepsilon^k(p)
   \lesssim \frac{\int_0^{\delta^2} a_\eps^2(s)(\eps+s^\gamma)\d s \cdot
   \delta^{d-k}}{
   \left(\delta^{\frac{d-k}{p'}}
   (\delta^2)^{\frac{k-1}{2}-\frac{k}{p}} \int_0^{\delta^2}a_\eps(s)\d s\right)^2
   }
    \cong \delta^{2-2\alpha_k} \left(\int_0^{\delta^2} \frac{1}{\eps+s^\gamma} \d s \right)^{-1}.
\end{align*}
Setting   $\delta=\eps^{\frac1{2\gamma}}$
yields $\textup R_\varepsilon^k(p) \lesssim \eps^{1-\frac{\alpha_k}{\gamma}}$, which proves the claim.

\medskip

\noindent{\bf Non-symmetric case~\eqref{eq_LBReps2}.}
In this case, we have
$$
  \textup R_\varepsilon^\circ(p) \lesssim \textup R_\varepsilon^1(p) \cong
  \eps^{1-\frac{1\wedge \alpha_1}{\gamma}}. 
$$
This concludes the proof of Theorem \ref{thm_LUBounds}.
\end{proof}

%%%%%%%%%%%%%%%%%%%
\section{Fourier concentration}\label{sec_concentration}

In this section, we prove Theorem \ref{thm_concentration}.

\begin{lemma}\label{lem_conc1}
  Assume $d\geq 2$, $k\in\{1,\ldots,d-1\}$, $2< p< 2_s^\star$.
Let $g_\varepsilon$ be $(s,\gamma)$-admissible  for some $s>\frac{d}{d+1}$ and $\gamma>1$. 
  Let $\ast\in\{\circ,k,\textup{rad}\}$. Assume that $u_\eps$ is a minimizer for $\textup R^\ast_\eps(p)$
  and that there exist nonzero functions $v_\eps,w_\eps$, such that $u_\eps=v_\eps+w_\eps$ and $\widehat v_\eps\cdot\widehat
    w_\eps=0$ almost everywhere. Moreover, suppose that there exists $M_\eps>1$, such that
    \begin{equation}\label{eq_assumptionM}
        \frac{\textup q_\eps(w_\eps)}{\|w_\eps\|_p^2}\geq M_\eps \textup R_\eps^\ast(p).
    \end{equation}
    Then the following estimates hold:
    \begin{equation}\label{eq_estimatesM}
        \frac{\|w_\eps\|_p}{\|v_\eps\|_p}\leq\frac2{M_\eps-1},\text{ and }\,\,\,
    \frac{\textup q_\eps(w_\eps)}{\textup q_\eps(v_\eps)}\leq\frac4{M_\eps(1-M_\eps^{-1})^2}.
    \end{equation}
\end{lemma}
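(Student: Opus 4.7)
The key observations are that (i) the disjointness of Fourier supports of $v_\eps$ and $w_\eps$ turns $\textup q_\eps$ into an additive functional on the pair, and (ii) the natural symmetry-respecting Fourier split (as in the paper's applications) keeps both $v_\eps$ and $w_\eps$ in the admissible class for $\textup R_\eps^\ast(p)$, so that each of them is a competitor for the Rayleigh quotient. The triangle inequality in $L^p$, applied in the reverse direction (it upper-bounds $\|u_\eps\|_p$), will couple $\|v_\eps\|_p$ and $\|w_\eps\|_p$ through the minimizing identity.

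First, I would record the three basic inputs. From $\widehat v_\eps \cdot \widehat w_\eps \equiv 0$ and Plancherel,
\[
\textup q_\eps(u_\eps) = \textup q_\eps(v_\eps) + \textup q_\eps(w_\eps).
\]
From $u_\eps$ being a minimizer and $v_\eps$ being admissible,
\[
\textup q_\eps(v_\eps) \geq \textup R_\eps^\ast(p) \|v_\eps\|_p^2, \qquad \textup q_\eps(u_\eps) = \textup R_\eps^\ast(p) \|u_\eps\|_p^2,
\]
while by assumption \eqref{eq_assumptionM},
\[
\textup q_\eps(w_\eps) \geq M_\eps \textup R_\eps^\ast(p) \|w_\eps\|_p^2.
\]

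Next I would combine these. Writing $a=\|v_\eps\|_p$, $b=\|w_\eps\|_p$, the three displays and $\|u_\eps\|_p\leq a+b$ yield
\[
(a+b)^2 \geq \|u_\eps\|_p^2 = \frac{\textup q_\eps(u_\eps)}{\textup R_\eps^\ast(p)} \geq a^2 + M_\eps b^2,
\]
which simplifies to $2ab \geq (M_\eps-1)b^2$, i.e., $b/a \leq 2/(M_\eps-1)$. This is the first estimate in \eqref{eq_estimatesM}.

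For the second estimate, I would use the bound on $b/a$ to upper-bound $\|u_\eps\|_p^2 \leq (a+b)^2 \leq a^2((M_\eps+1)/(M_\eps-1))^2$, and then subtract the minimality lower bound on $\textup q_\eps(v_\eps)$ from the identity $\textup q_\eps(u_\eps)=\textup q_\eps(v_\eps)+\textup q_\eps(w_\eps)$:
\[
\textup q_\eps(w_\eps) \leq \textup R_\eps^\ast(p)\|u_\eps\|_p^2 - \textup R_\eps^\ast(p) a^2 \leq \textup R_\eps^\ast(p) a^2 \cdot \frac{(M_\eps+1)^2 - (M_\eps-1)^2}{(M_\eps-1)^2} = \frac{4 M_\eps \textup R_\eps^\ast(p) a^2}{(M_\eps-1)^2}.
\]
Dividing by $\textup q_\eps(v_\eps)\geq \textup R_\eps^\ast(p) a^2$ and rewriting $(M_\eps-1)^2 = M_\eps^2(1-M_\eps^{-1})^2$ gives the second bound in \eqref{eq_estimatesM}. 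There is no real obstacle; the only subtle point is the admissibility of $v_\eps$ for the infimum defining $\textup R_\eps^\ast(p)$, which follows from the fact that the Fourier truncation used in \eqref{eq_split} commutes with rotations in $O(d-k)\times O(k)$ (and with the full orthogonal group in the radial case).
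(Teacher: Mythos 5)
Your proof is correct and follows essentially the same route as the paper: the same inputs (additivity of $\textup q_\eps$ from the disjoint Fourier supports, minimality of $u_\eps$, admissibility of $v_\eps$ as a competitor, hypothesis \eqref{eq_assumptionM}) combined with the triangle inequality, and your derivation of the first estimate is identical. The only difference is cosmetic: for the second estimate the paper substitutes $\|v_\eps\|_p^2\leq \textup q_\eps(v_\eps)/\textup R_\eps^\ast(p)$ and $\|w_\eps\|_p^2\leq \textup q_\eps(w_\eps)/(M_\eps\textup R_\eps^\ast(p))$ into the denominator and solves the resulting quadratic inequality in $\sqrt{\textup q_\eps(w_\eps)/\textup q_\eps(v_\eps)}$, whereas you deduce the second bound from the first; both yield the same constant, and your closing remark on the admissibility of $v_\eps,w_\eps$ in the symmetric classes is a point the paper also (implicitly) relies on.
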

\begin{proof}
    The assumption $\widehat v_\eps\cdot\widehat w_\eps=0$ implies $\textup q_\eps(u_\eps)=\textup
    q_\eps(v_\eps)+\textup q_\eps(w_\eps)$. Minkowski's inequality and assumption \eqref{eq_assumptionM} then
    imply
    \[\textup R^\ast_\eps(p)=\frac{\textup q_\eps(u_\eps)}{\|u_\eps\|_p^2}\geq\frac{\textup
    q_\eps(v_\eps)+\textup q_\eps(w_\eps)}{(\|v_\eps\|_p+\|w_\eps\|_p)^2}\geq \textup
    R^\ast_\eps(p)\frac{\|v_\eps\|_p^2+M_\eps \|w_\eps\|_p^2}{(\|v_\eps\|_p+\|w_\eps\|_p)^2}.\]
    This can be rewritten as \[(\|v_\eps\|_p+\|w_\eps\|_p)^2\geq \|v_\eps\|_p^2+M_\eps \|w_\eps\|_p^2,\]
    which leads to the first estimate in \eqref{eq_estimatesM} after elementary manipulations. The second
    estimate in \eqref{eq_estimatesM} follows similarly:  \[\textup R^\ast_\eps(p)
        \geq\frac{\textup q_\eps(v_\eps)+\textup q_\eps(w_\eps)}{(\|v_\eps\|_p+\|w_\eps\|_p)^2}\geq
        \frac{\textup q_\eps(v_\eps)+\textup q_\eps(w_\eps)}{\left(\sqrt{\frac{\textup q_\eps(v_\eps)}{\textup
        R^\ast_\eps(p)}}+\sqrt{\frac{\textup q_\eps(w_\eps)}{M_\eps\textup R^\ast_\eps(p)}}\right)^2}\] can be
        rewritten as \[\left(\sqrt{\textup q_\eps(v_\eps)}+M_\eps^{-\frac12}\sqrt{\textup
        q_\eps(w_\eps)}\right)^2 \geq \textup q_\eps(v_\eps)+\textup q_\eps(w_\eps),\] 
        which leads to the second estimate in
    \eqref{eq_estimatesM}.
\end{proof}

Theorem \ref{thm_concentration} follows by verifying the lower bound \eqref{eq_assumptionM}
for some $M_\eps$ such that $M_\eps\to\infty$, as $\eps\to 0^+$.  The latter condition is the content of our next result.

\begin{lemma}\label{lem_LBMdelta}
Assume $d\geq 2$, $k\in\{1,\ldots,d-1\}$, $2< p< 2_s^\star$.
Let $g_\varepsilon$ be $(s,\gamma)$-admissible  for some $s>\frac{d}{d+1}$ and $\gamma>1$.
  Let $\ast\in\{\circ,k,\textup{rad}\}$.
  Let $u_\eps,v_\eps,w_\eps$ be as in Theorem~\ref{thm_concentration}, such that $\widehat w_\eps$
  vanishes identically on the spherical shell $A_{\eps,\delta_\eps}$ from \eqref{eq_Iinterval}--\eqref{eq_sphshell}
  for a given positive null sequence $\{\delta_\eps\}$.
 Then
    \begin{equation}\label{eq_LBM}
        \frac{\textup q_\eps(w_\eps)}{\|w_\eps\|_p^2}\gtrsim  M_\eps  \textup R_\varepsilon^\ast(p)
    \end{equation}
 holds for some $M_\eps= M_\eps(p,d,k,s,\gamma)>0$ satisfying $M_\eps\to\infty$, as
 $\eps\to 0^+$.
\end{lemma}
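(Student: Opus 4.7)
Write $\delta=\delta_\eps$. The plan is to further split $w_\eps$ into three frequency-disjoint pieces and rerun the proofs from Section~\ref{sec:lower} on each piece, exploiting that each piece is supported away from the critical sphere $\{|\xi|=1\}$. Setting
\begin{align*}
\widehat{w}_\eps^{\textup{in}}&:=\mathbf{1}_{\{||\xi|-1|<\delta\eps^{1/\gamma}\}}\widehat{w}_\eps,\\
\widehat{w}_\eps^{\textup{out}}&:=\mathbf{1}_{\{\delta^{-1}\eps^{1/\gamma}<||\xi|-1|\leq \frac12\}}\widehat{w}_\eps,\\
\widehat{w}_\eps^{\textup{far}}&:=\mathbf{1}_{\{||\xi|-1|>\frac12\}}\widehat{w}_\eps,
\end{align*}
we have $w_\eps=w_\eps^{\textup{in}}+w_\eps^{\textup{out}}+w_\eps^{\textup{far}}$ since $\widehat{w}_\eps$ vanishes on $A_{\eps,\delta}$. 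Each piece inherits the symmetry class of $w_\eps$, $\textup q_\eps$ is additive across them, and $\|\cdot\|_p$ is subadditive. By the same reasoning used at the start of Section~\ref{sec:lower}, it therefore suffices to establish \eqref{eq_LBM} with $w_\eps$ replaced by each of the three pieces separately, and $M_\eps\to\infty$ in each case.

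The far piece is handled by the Sobolev embedding argument of \eqref{eq_lowerbound_w}, which yields $\textup q_\eps(w_\eps^{\textup{far}})/\|w_\eps^{\textup{far}}\|_p^2\gtrsim 1$; hence \eqref{eq_LBM} holds with $M_\eps\gtrsim R_\eps^\ast(p)^{-1}\to\infty$ by Theorem~\ref{thm_LUBounds}. For $w_\eps^{\textup{in}}$ and $w_\eps^{\textup{out}}$ I would rerun the chain \eqref{eq_UBsuperP}--\eqref{eq_UBvInterpol} (or its Bessel-based radial analogue leading to \eqref{eq_ToInterpolate}), the only modification being that the integral $\int_{1/2}^{3/2}g_\eps(r)^{-1}\,\d r$ is replaced by the integral over the corresponding thin shell. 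A direct computation from \eqref{eq_A2} (using $\gamma>1$) yields
\[
\int_{\{|r-1|<\delta\eps^{1/\gamma}\}}\frac{\d r}{g_\eps(r)}\cong \delta\,\eps^{\frac1\gamma-1},\qquad
\int_{\{\delta^{-1}\eps^{1/\gamma}<|r-1|<\frac12\}}\frac{\d r}{g_\eps(r)}\cong \delta^{\gamma-1}\,\eps^{\frac1\gamma-1},
\]
which are smaller than the corresponding integral in~\eqref{eq_UBsuperP} by factors of $\delta$ and $\delta^{\gamma-1}$, respectively. Moreover, since $g_\eps(r)\gtrsim \delta^{-\gamma}\eps$ pointwise on the outer region, the $L^2$-bound \eqref{eq_L2BD} refines to $\|w_\eps^{\textup{out}}\|_2^2\lesssim \delta^\gamma\eps^{-1}\textup q_\eps(w_\eps^{\textup{out}})$. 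Writing $\alpha:=\alpha_\ast\wedge 1$ and interpolating with the $L^2$-bound as in~\eqref{eq_UBvInterpol}, I expect
\[
\|w_\eps^{\textup{in}}\|_p^2\lesssim \delta^{\alpha}\eps^{\frac{\alpha}\gamma-1}\textup q_\eps(w_\eps^{\textup{in}}),\qquad
\|w_\eps^{\textup{out}}\|_p^2\lesssim \delta^{\gamma-\alpha}\eps^{\frac{\alpha}\gamma-1}\textup q_\eps(w_\eps^{\textup{out}}).
\]
Comparing with $R_\eps^\ast(p)\cong \eps^{1-\alpha/\gamma}$ from Theorem~\ref{thm_LUBounds}, estimate \eqref{eq_LBM} follows with $M_\eps\gtrsim \delta^{-\alpha}\wedge \delta^{\alpha-\gamma}$, which diverges because $0<\alpha\leq 1<\gamma$ and $\delta\to 0$.

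The main obstacle is purely administrative: one has to run the above scheme uniformly over all $\ast\in\{\circ,k,\textup{rad}\}$ and each regime of $p$ (below, at, or above the relevant endpoint), substituting in turn the $G_k$-symmetric Stein--Tomas inequality~\eqref{eq_GkSymST}, the classical Stein--Tomas inequality, or the Bessel decay estimate. At the radial critical exponent $p=2_\star^{\textup{rad}}$ one must rerun Proposition~\ref{prop:interpolate} with the rescaled constants $C_1=(\eps^{-1}\textup q_\eps(w))^{1/2}$ and $C_2=(\delta\eps^{1/\gamma-1}\textup q_\eps(w))^{1/2}$ on the inner piece (and the analogous outer constants on the outer piece), producing the expected polynomial gain in $\delta$ modulated by a logarithm that is harmless. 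The essential role of the hypothesis $\gamma>1$ surfaces in the outer estimate: it guarantees the decay $\delta^{\gamma-1}\to 0$ displayed above; in the borderline regime $\gamma=1$ this gain would degrade to a logarithm and a finer dyadic decomposition in $|r-1|$ would become necessary.
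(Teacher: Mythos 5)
Your proposal is correct and follows essentially the same route as the paper: the paper runs the Section~\ref{sec:lower} estimates directly on $w_\eps$, using $\int_{|r-1|\notin I_{\eps,\delta_\eps}} g_\eps(r)^{-1}\,\d r \lesssim (\delta_\eps+\delta_\eps^{\gamma-1})\,\eps^{\frac1\gamma-1}$ and interpolating with $\|w_\eps\|_2^2\lesssim \eps^{-1}\textup q_\eps(w_\eps)$ to obtain $M_\eps\cong(\delta_\eps+\delta_\eps^{\gamma-1})^{-(1\wedge\alpha_k)}$, which agrees with your three-piece version up to your (harmless, slightly sharper) refined $L^2$-bound on the outer piece. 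The remaining difference is presentational: the paper writes out only the case $\ast=k$ and leaves the other symmetry classes and the radial critical exponent to the reader --- precisely the ``administrative'' work you describe.
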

\begin{proof} 
  We only consider the case $\ast = k$ and  split the analysis into two cases.
    If $2_\star^k\leq p< 2_s^\star$, then reasoning as in \eqref{eq_UBsuperP} we obtain
    \begin{align*}
      \|w_\eps\|_p^2
      &\lesssim \textup q_\eps(w_\eps) \int_{|r-1|\notin I_{\eps,\delta_\eps}}\frac{\d r}{g_\eps(r)} \\
      &\lesssim \textup q_\eps(w_\eps) \left(\int_0^{\delta_\eps\eps^{1/\gamma}}\eps^{-1}\,\d s
    +\int_{\delta_\eps^{-1}\eps^{1/\gamma}}^\infty s^{-\gamma}\,\d s\right) \\
      &\cong \textup q_\eps(w_\eps) (\delta_\eps+ \delta_\eps^{\gamma-1}) \eps^{\frac{1}{\gamma}-1}.
    \end{align*}
    Here, we used $I_{\eps,\delta}=[\delta\eps^{1/\gamma},\delta^{-1}\eps^{1/\gamma}]$
    and that $g_\eps$ satisfies the lower bound in \eqref{eq_A2}.
    In light of \eqref{eq_LBReps}, it follows that \eqref{eq_LBM} holds with
    $M_\eps\cong(\delta_\eps+\delta_\eps^{\gamma-1})^{-1}$.
    If $2< p< 2_\star^k$, then reasoning as in \eqref{eq_L2BD}--\eqref{eq_UBvInterpol} yields
    \[
     \|w_\eps\|_p^2\leq (\|w_\eps\|_2^2)^{1-\alpha_k}
     (\|w_\eps\|_{2_\star^k}^2)^{\alpha_k}\lesssim(\eps^{-1}\textup
     q_\eps(w_\eps))^{1-\alpha_k}\left((\delta_\eps+\delta_\eps^{\gamma-1})\eps^{\frac1{\gamma}-1}\textup
     q_\eps(w_\eps)\right)^{\alpha_k},\] so \eqref{eq_LBReps} yields \eqref{eq_LBM} with $M_\eps
    \cong(\delta_\eps+\delta_\eps^{\gamma-1})^{-\alpha_k}$.
    Since  $2< p< 2_\star^k$ implies $\alpha_k>0$, this proves the claim.
\end{proof}

%%%%%%%%%%%%%%%%%%%
\section{Roughness}\label{sec_roughness}

In this section, we prove Theorem \ref{thm_roughness}, which should be regarded as a quantified version of
the fact that non-radial minimizers of the Rayleigh quotient asymptotically exhibit some sort of
Knapp-type behaviour.
In particular, their Fourier transforms must develop certain singularities along spheres which are near $\mathbb S^{d-1}$. 
To prove this, we use  Sobolev estimates for the adjoint
restriction operator \eqref{eq_ExtOp}. In the general (non-symmetric) case, optimal results were established by
Cho--Guo--Lee \cite{CGL15} but curiously their methods do not seem to allow for the required improved
estimates in the $G_k$-symmetric setting, which are key to  our analysis.

\begin{proposition}\label{prop_smoothness}
Let $d\geq 2$ and $k\in\{1,\ldots,d-1\}$. There exists $t^\star = t^\star(d)>0$ such that, for all
$t\in [0,t^\star)$ and  $G_k$-symmetric functions
$f\in H^t(\mathbb{S}^{d-1})$, the following estimate holds:
 \begin{equation}\label{eq_smoothing}
\|\widehat{f\sigma}\|_{2_\star^k(t)}\lesssim_t \|f\|_{H^t(\mathbb{S}^{d-1})},
\quad\text{where } \frac{1 }{2_\star^k(t)} := \frac{1-t/t^\star}{2_\star^k} +  
 \frac{t/t^\star}{2_\star^{\textup{rad}}}. 
 \end{equation}
 Moreover, estimate \eqref{eq_smoothing} for $k\in\{1,d-1\}$ holds for every function $f\in H^t(\mathbb{S}^{d-1})$.
\end{proposition}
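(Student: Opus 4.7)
The plan is to prove Proposition~\ref{prop_smoothness} by Sobolev-type interpolation between two endpoint estimates along the Sobolev line $t\mapsto 2_\star^k(t)$. At $t=0$, the first endpoint is precisely the $G_k$-symmetric Stein--Tomas inequality~\eqref{eq_GkSymST}, which for $k\in\{1,d-1\}$ reduces to the classical Stein--Tomas inequality and thus needs no symmetry hypothesis on $f$. At $t=t^\star$, I would aim for a $k$-independent weak-type far endpoint of the form
\[
  \|\widehat{f\sigma}\|_{L^{2_\star^{\textup{rad}},\infty}(\R^d)}
  \lesssim \|f\|_{H^{t^\star}(\sph{d-1})},
\]
valid for arbitrary $f$. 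The passage to a weak target is unavoidable, since strong $L^{2_\star^{\textup{rad}}}$-boundedness already fails for $f\equiv 1$ (as $\widehat\sigma \in L^{2_\star^{\textup{rad}},\infty}\setminus L^{2_\star^{\textup{rad}}}$); this explains why $t=t^\star$ is excluded from the range in~\eqref{eq_smoothing}.

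The natural candidate $t^\star=(d-1)/(2d)$ depends only on $d$ and is dictated by the Knapp example: a cap $C_\rho\subset\sph{d-1}$ of radius $\rho>0$ satisfies $\|\textbf{1}_{C_\rho}\|_{L^2}\cong \rho^{(d-1)/2}$, $\|\textbf{1}_{C_\rho}\|_{H^t}\cong \rho^{(d-1)/2-t}$, while $\|\widehat{\textbf{1}_{C_\rho}\sigma}\|_{L^{p,\infty}}\cong \rho^{d-1-(d+1)/p}$, and matching scaling at $p=2_\star^{\textup{rad}}$ forces exactly $t^\star=(d-1)/(2d)$. To establish the weak endpoint I would decompose $f=\sum_{n\geq 0} P_n f$ via spherical-harmonic projections and use the Funk--Hecke identity
\[
  \widehat{Y\sigma}(r\omega) = c_d\, r^{-(d-2)/2}\, J_{n+(d-2)/2}(r)\, Y(\omega)\qquad (r>0,\ \omega\in\sph{d-1})
\]
for spherical harmonics $Y$ of degree $n$. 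The quantitative analysis then combines (i) the sharp pointwise bounds for $J_\nu$ (exponential smallness for $r\ll\nu$, Airy-type behaviour at $r\cong\nu$, and oscillation with amplitude $r^{-1/2}$ for $r\gg\nu$), (ii) Sogge's $L^q$ eigenfunction estimates on $\sph{d-1}$, and (iii) a dyadic summation of the resulting block estimates $\|\widehat{(P_j f)\,\sigma}\|_{L^{2_\star^{\textup{rad}},\infty}}\lesssim 2^{jt^\star}\|P_j f\|_{L^2}$.

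Given both endpoints, the conclusion~\eqref{eq_smoothing} would follow from an interpolation argument, e.g.\ Stein's theorem applied to the analytic family $T_z f := \widehat{(1-\Delta_{\sph{d-1}})^{-z/2} f\, \sigma}$, which commutes with the action of $G_k$. The $G_k$-symmetry enters only through the $t=0$ endpoint, which explains why the interpolant inherits the refined $2_\star^k$-exponent throughout $[0,t^\star)$, and also clarifies why the Cho--Guo--Lee framework alone is insufficient here: it furnishes a $k$-independent far endpoint but does not encode the sharper input from~\cite{MOS21}. The main obstacle is proving the far endpoint itself: the dyadic Bessel/Sogge block sum must be carried out in the Lorentz quasinorm $L^{2_\star^{\textup{rad}},\infty}$, which lacks a triangle inequality, forcing either a Bourgain-style real-interpolation summation or a detour through a continuum of intermediate strong-type estimates strictly inside the Sobolev line; the strict inequality $t<t^\star$ in the conclusion then reflects this upgrade from weak to strong type.
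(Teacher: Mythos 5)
Your overall architecture coincides with the paper's: interpolate the $G_k$-symmetric Stein--Tomas bound \eqref{eq_GkSymST} at $t=0$ against a weak-type bound $\|\widehat{f\sigma}\|_{2_\star^{\textup{rad}},\infty}\lesssim\|f\|_{H^{t^\star}(\mathbb S^{d-1})}$ at $t=t^\star$, with the symmetry entering only at the $t=0$ endpoint; your observation that strong type must fail at $2_\star^{\textup{rad}}$ (because $\widehat\sigma\notin L^{2_\star^{\textup{rad}}}$) and your identification of the Knapp-critical value $t^\star=(d-1)/(2d)$ are both correct. The genuine gap is that you never prove the far endpoint. You propose to establish it at the \emph{sharp} regularity via Funk--Hecke, Bessel asymptotics across the transition region, Sogge eigenfunction bounds, and a dyadic summation in the quasinormed space $L^{2_\star^{\textup{rad}},\infty}$ --- and you yourself flag this summation as ``the main obstacle'' without resolving it. As written, the argument therefore rests on an unproved (and genuinely delicate) endpoint estimate. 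This difficulty is entirely avoidable: the statement only asserts the existence of \emph{some} $t^\star=t^\star(d)>0$, so no sharpness is required. The paper instead quotes the soft pointwise bound $|\widehat{f\sigma}(x)|\lesssim\|f\|_{C^m(\mathbb S^{d-1})}(1+|x|)^{(1-d)/2}$ with $m=\lfloor\frac{d-1}{2}\rfloor+1$ from \cite[Prop.~1]{Ma19}, and chooses $t^\star$ large enough that $H^{t^\star}(\mathbb S^{d-1})\hookrightarrow C^m(\mathbb S^{d-1})$; the weak bound then follows immediately from $\|(1+|x|)^{(1-d)/2}\|_{2_\star^{\textup{rad}},\infty}<\infty$. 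You should either adopt such a soft endpoint or supply the missing dyadic argument in full; in its current form the proof is incomplete.

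A secondary point: running the interpolation through Stein's analytic family $T_z f=\widehat{(1-\Delta_{\mathbb S^{d-1}})^{-z/2}f\,\sigma}$ is awkward when one endpoint is only of weak type, since the classical complex-interpolation theorem requires strong-type bounds (or a Stein--Weiss-type variant with extra hypotheses). Real interpolation, as in the paper, is the natural tool here: it accepts the weak-type endpoint directly, and the identification $(L^2(\mathbb S^{d-1}),H^{t^\star}(\mathbb S^{d-1}))_{\theta_t,2}=H^{t}(\mathbb S^{d-1})$ with $\theta_t=t/t^\star$, together with $\|g\|_{2_\star^k(t)}\lesssim\|g\|_{2_\star^k(t),2}$ (valid since $2_\star^k(t)\geq2$), yields \eqref{eq_smoothing} with no further work. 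Your remark that the exclusion of $t=t^\star$ reflects the weak-to-strong upgrade is consistent with this.
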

\begin{proof}
In view of \eqref{eq_GkSymST}, the estimate
\begin{equation}\label{eq_GkST2}
\|\widehat{f\sigma}\|_{2_\star^k}\lesssim \|f\|_{L^2(\mathbb S^{d-1})}
\end{equation}
holds for every $G_k$-symmetric function $f:\mathbb{S}^{d-1}\to\Co$.
On the other hand, it was proved in \cite[Prop.\ 1]{Ma19} that the pointwise decay estimate
\[
  |\widehat{f\sigma}(x)| \lesssim \|f\|_{C^m(\mathbb S^{d-1})} (1+|x|)^{\frac{1-d}2}
\]
holds for every $f\in C^m(\mathbb S^{d-1})$ with $m=\lfloor\frac{d-1}2\rfloor+1$.
By Sobolev embedding, we may choose $t^\star=t^\star(d)>0$ such that
$H^{t^\star}(\mathbb S^{d-1})$ embeds into $C^m(\mathbb{S}^{d-1})$. Then
\begin{equation}\label{eq_HtSt}
\|\widehat{f\sigma}\|_{2_\star^{\textup{rad}},\infty} 
\lesssim \|f\|_{C^m(\mathbb S^{d-1})} \|(1+|x|)^{\frac{1-d}2}\|_{2_\star^{\textup{rad}},\infty}
\lesssim \|f\|_{H^{t^\star}(\mathbb S^{d-1})},
\end{equation}
for every $f\in H^{t^\star}(\mathbb S^{d-1})$.
The desired conclusion follows from interpolating \eqref{eq_GkST2} and \eqref{eq_HtSt}.
Indeed, given $t\in(0,t^\star)$, define $\theta_t:= t/t^\star$,
so that real interpolation \cite[Ch.\ 3]{BL} implies
\[\|\widehat{f\sigma}\|_{2_\star^k(t)}
\lesssim \|\widehat{f\sigma}\|_{2_\star^k(t),2}
\cong \|\widehat{f\sigma}\|_{(L^{2_\star^k},L^{2_{\star}^{\textup{rad}},\infty})_{\theta_t,2}}
\lesssim \|f\|_{(L^2(\mathbb S^{d-1}),H^{t^\star}(\mathbb S^{d-1}))_{\theta_t,2}}
\cong \|f\|_{H^t(\mathbb S^{d-1})}.
\]
In the last equality, we used  the identity  
\[(L^2(\mathbb S^{d-1}),H^{t^\star}(\mathbb S^{d-1}))_{\theta_t,2}
= H^t(\mathbb S^{d-1}),\]
which holds by definition of $\theta_t$ according to \cite[Theorem 6.2.4 and Theorem 6.4.4]{BL}.
\end{proof}

\begin{proof}[Proof of Theorem \ref{thm_roughness}]
  We focus on the proof for $G_k$-symmetric functions.  
  In view of Proposition \ref{prop_smoothness}, the non-symmetric case $\ast=\circ$
  is treated as the case $k\in\{1,d-1\}$. 
  
  Assume $2<p<2_\star^k$. 
  Lowering $t>0$ if necessary, we lose no generality in further assuming $2<p<2_\star^k(t)$ since
    $2_\star^k(t)\nearrow 2_\star^k$, as $t\to 0^+$.
    Aiming at a contradiction, suppose that \eqref{eq_ConclThmRough} does not hold, i.e., there exists $C<\infty$, such that
    \begin{equation}\label{eq_twdCtrdct}
        \sup_{|r-1|\in I_{\varepsilon,\delta_\varepsilon}} \frac{\|\widehat u_\varepsilon(r\cdot)\|_{H^t(\mathbb S^{d-1})}}{\|\widehat u_\varepsilon(r\cdot)\|_{L^2(\mathbb S^{d-1})}}\leq C,\text{ as } \varepsilon\to 0^+.
    \end{equation}
    Our aim is to use \eqref{eq_twdCtrdct} in order to verify the hypothesis~\eqref{eq_assumptionM}
    in Lemma~\ref{lem_conc1} with $M_\eps\to\infty$, as $\eps\to 0^+$. To this end, split $u_\eps=v_\eps+w_\eps$ as
   in \eqref{eq_split}, so that $\widehat w_\eps$ is supported outside the spherical shell $A_{\eps,\delta_\eps}$
   where $\delta_\eps\to 0^+$ as $\eps\to 0^+$. In Lemma \ref{lem_LBMdelta}, we have already verified 
   that this part is asymptotically negligible. More precisely, 
   \begin{equation}\label{eq_LBque2}
       \frac{\textup q_\eps(w_\eps)}{\|w_\eps\|_p^2}\gtrsim M_{\eps} \textup R_\eps^k(p),
   \end{equation}
   with $M_{\eps}\to\infty$, as $\eps\to 0^+.$  Next we use~\eqref{eq_twdCtrdct} in order to prove lower bounds  for the
   corresponding Rayleigh quotient involving $v_\eps$.
   To this end, we invoke Fourier inversion, the Sobolev estimate \eqref{eq_smoothing},
   assumption~\eqref{eq_twdCtrdct} and Lemma~\ref{lem_technical},
   yielding
   \begin{align*}
       \|v_\eps\|_{2_\star^k(t)}^2
       &\lesssim
       \left(\int_{|r-1|\in I_{\eps,\delta_\eps}} \left\|\int_{\mathbb S^{d-1}} e^{i\langle\omega,\cdot\rangle} \widehat v_\eps(r\omega)\,\d\sigma(\omega)\right\|_{2_\star^k(t)}\,\d r\right)^2\\
       &\lesssim\left( \int_{|r-1|\in I_{\eps,\delta_\eps}} \|\widehat v_\eps(r\cdot)\|_{H^t(\mathbb S^{d-1})}\,\d r\right)^2
        \leq C^2\left( \int_{|r-1|\in I_{\eps,\delta_\eps}} \|\widehat
       v_\eps(r\cdot)\|_{L^2(\mathbb S^{d-1})}\,\d r\right)^2\\
       &\lesssim \textup q_\eps(v_\eps)\left(\int_{|r-1|\in I_{\eps,\delta_\eps}}\frac{\d r}{g_\eps(r)}\right)
       \lesssim \eps^{\frac1{\gamma}-1} \textup q_\eps(v_\eps).
   \end{align*}
The key observation is that the exponent $2_\star^k(t)$ is strictly smaller than $2_\star^k$, and therefore the corresponding interpolation parameter
   \[\alpha_k(t):=\frac{\frac12-\frac1p}{\frac12-\frac1{2_\star^k(t)}}\in (0,1)\]
   satisfies $\alpha_k(t)>\alpha_k$; recall  \eqref{eq_alphak}.
   For $2<p<2_\star^k(t)$, we then have
   \begin{align*}
       \|v_\eps\|_p^2
       &\leq  (\|v_\eps\|_2^2)^{1-\alpha_k(t)}(\|u_\eps\|_{2_\star^k(t)}^2)^{\alpha_k(t)}\\
   &\lesssim (\eps^{-1} \textup q_\eps(v_\eps))^{1-\alpha_k(t)} (\eps^{\frac1{\gamma}-1}
   \textup q_\eps(v_\eps))^{\alpha_k(t)} = \eps^{\frac{\alpha_k(t)}{\gamma}-1}\textup q_\eps(v_\eps).
   \end{align*}
   In view of \eqref{eq_LBReps}, it follows that
   \begin{equation}\label{eq_LBque1}
       \frac{\textup q_\eps(v_\eps)}{\|v_\eps\|_p^2}\gtrsim \eps^{1-\frac{\alpha_k(t)}{\gamma}}
   \cong  \eps^{\frac{\alpha_k-\alpha_k(t)}{\gamma}} \textup R_\eps^k(p).
   \end{equation}
   From \eqref{eq_LBque2} and \eqref{eq_LBque1}, we then conclude
   \[\textup R_\eps^k(p)=\frac{\textup q_\eps(u_\eps)}{\|u_\eps\|_p^2}\geq\frac12\left(\frac{\textup q_\eps(v_\eps)}{\|v_\eps\|_p^2}\wedge\frac{\textup q_\eps(w_\eps)}{\|w_\eps\|_p^2}\right)
   \gtrsim\left(\eps^{\frac{\alpha_k-\alpha_k(t)}{\gamma}}\wedge M_{\eps}\right)\textup
   R_\eps^k(p),\] which is absurd since $\alpha_k-\alpha_k(t)<0$ and $M_{\eps}\to\infty$, as
   $\eps\to 0^+.$ 
   So the assumed uniform bound \eqref{eq_twdCtrdct} cannot hold, and this completes the proof of the theorem.
\end{proof}

%%%%%%%%%%%%%%%%%%%

\section*{Acknowledgements}
DOS\@ is partially supported by Centro de An\'{a}lise Matem\'{a}tica, Geometria e Sistemas Din\^{a}micos (CAMGSD) and
IST Santander Start Up Funds, and is grateful to Giuseppe Negro for valuable discussions during the preparation of this work.
This work was initiated during a pleasant visit of RM to Instituto Superior T\'{e}cnico, whose hospitality is
greatly appreciated.

%%%%%%%%%%%%%%%%%%%

\appendix
\section{Existence of ground states}\label{app_existence}

\begin{proposition}\label{prop_appexistence}
  Assume  $d\geq 2$ and $2<p<2_s^\star$. Given $\eps>0$,  let $g_\eps$ be $(s,\gamma)$-admissible
   for some  $s>\frac d{d+1}$ and $\gamma>1$.
 Then the infimum defining
 \[ 
  \textup R^\ast_\eps(p) \text{ for }\ast\in\{\circ,\textup{rad}\}
 \quad\text{or}\quad
 \textup R^k_\eps(p) \text{ for }k\in\{2,\ldots,d-2\}
 \]
  is attained. Any  minimizer is a non-trivial weak solution of \eqref{eq_BiHarNLSGen} after multiplication by a
  nonzero constant.
\end{proposition}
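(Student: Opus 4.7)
\medskip
\noindent\textbf{Proof proposal.}
The plan is to apply the direct method of the calculus of variations in each of the three settings. Let $X^\ast$ denote the ambient Hilbert space: $H^s(\R^d)$ if $\ast=\circ$, $H^s_{\textup{rad}}$ if $\ast=\textup{rad}$, and $H^s_k$ if $\ast=k$ with $k\in\{2,\ldots,d-2\}$. Since $g_\eps$ is $(s,\gamma)$-admissible, the map $u\mapsto\sqrt{\textup q_\eps(u)}$ is an equivalent norm on $H^s(\R^d)$, and hence $\textup q_\eps$ is a continuous, coercive, convex quadratic form on $X^\ast$; in particular, it is weakly lower semicontinuous.

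First I would select a minimizing sequence $(u_n)\subset X^\ast$ normalized by $\|u_n\|_p = 1$ and $\textup q_\eps(u_n)\to \textup R^\ast_\eps(p)$. Coercivity of $\textup q_\eps$ delivers a uniform $H^s$-bound, so after extraction $u_n\rightharpoonup u$ weakly in $H^s$. The crux is to prove that $u\not\equiv 0$: once this is established, weak lower semicontinuity of $\textup q_\eps$ combined with $\|u\|_p\leq\liminf_n\|u_n\|_p = 1$ and the definition of the infimum will force both $\|u\|_p=1$ and $\textup q_\eps(u)=\textup R^\ast_\eps(p)$, and $u$ is a minimizer.

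For $\ast\in\{\textup{rad}\}\cup\{k:k\in\{2,\ldots,d-2\}\}$, I would invoke the classical compactness of the embeddings
\[
H^s_{\textup{rad}}\hookrightarrow L^p(\R^d), \qquad H^s_k\hookrightarrow L^p(\R^d),\quad k\in\{2,\ldots,d-2\},
\]
valid in the full subcritical range $2<p<2_s^\star$. The radial statement is Strauss' lemma; the $G_k$-symmetric version follows from Lions' general compactness theorem for Sobolev spaces of functions invariant under compact group actions on $\R^d$ whose orbits on the sphere at infinity have positive dimension. The assumption $k\in\{2,\ldots,d-2\}$ is precisely what ensures that both orbit types $O(d-k)\omega$ and $O(k)\omega'$ on $\mathbb S^{d-1}$ are of positive dimension, so compactness holds. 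Strong $L^p$-convergence along the subsequence then yields $\|u\|_p=1$, and in particular $u\not\equiv 0$, concluding the direct method.

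For the non-symmetric case $\ast = \circ$, the embedding $H^s\hookrightarrow L^p$ fails to be compact because of translation invariance, so compactness cannot be obtained for free and I would appeal to \cite{BLSS20}, where the existence of ground states of $g_\eps(|D|)u=|u|^{p-2}u$ is established via a concentration-compactness argument precisely under the standing assumption $s>d/(d+1)$, which guarantees that the endpoint Stein--Tomas exponent is Sobolev-subcritical. In all cases, standard Lagrange multiplier theory shows that any minimizer $u$ satisfies the Euler--Lagrange equation $g_\eps(|D|)u = \lambda|u|^{p-2}u$ with $\lambda = \textup R^\ast_\eps(p)\,\|u\|_p^{2-p} > 0$; multiplying $u$ by $\lambda^{1/(p-2)}$ then produces a nontrivial weak solution of \eqref{eq_BiHarNLSGen}. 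The only genuinely delicate step is the non-symmetric case, which is outsourced to \cite{BLSS20}; the symmetric cases reduce to bookkeeping once the Strauss/Lions compact embeddings are recorded.
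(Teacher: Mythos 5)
Your proposal is correct and follows essentially the same route as the paper: a normalized minimizing sequence, weak compactness from the equivalence of $\sqrt{\textup q_\eps}$ with the $H^s$-norm, the Lions-type compact embedding $H^s_k\hookrightarrow L^p$ for $k\in\{2,\ldots,d-2\}$ (the paper cites \cite[Th\'eor\`eme III.3]{L82}) together with its radial analogue, weak lower semicontinuity of the convex quadratic form, and an appeal to \cite[Theorem 1]{BLSS20} for the non-symmetric case, finishing with Lagrange multipliers. The only caveat is that your intermediate remark that nontriviality of the weak limit alone "forces $\|u\|_p=1$" is not quite right as stated, but this is harmless since you then correctly derive $\|u\|_p=1$ from the strong $L^p$-convergence supplied by the compact embedding.
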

 \begin{proof} 
  The non-symmetric case of $\textup R^\circ_\eps(p)$ is covered by \cite[Theorem
  1]{BLSS20}. The discussion of the radially symmetric case $\ast = \textup{rad}$ is almost identical to the
  $G_k$-symmetric case for $k\in\{2,\ldots,d-2\}$, so we only discuss the latter. 
  
  Let $\{u_n\}\subset H_k^s$ be a minimizing sequence such that $\|u_n\|_p=1$ for each $n$, so that
  \[\textup q_\eps(u_n)\searrow \textup R_\eps^k(p), \text{ as }n\to\infty.\]
  Then the   sequence $\{\textup q_\eps(u_n)\}$ is bounded in $\R$, and so $\{u_n\}$ is likewise bounded in
  $H^s_k$. The latter statement is a consequence of the definition of
  $\textup q_\eps$ and the inequalities \eqref{eq_A1}--\eqref{eq_A2} for
  the $(s,\gamma)$-admissible function $g_\eps$. By Alaoglu's theorem, there exists a subsequence
  $\{u_{n_\ell}\}\subset H^s_k$ satisfying $u_{n_\ell}\rightharpoonup u$ in $H^s_k$, as $\ell\to\infty$.
  Since $k\in\{2,\ldots,d-2\}$, $H^s_k$ compactly embeds into $L^p(\R^d)$; 
  see~%\cite[Cor.\ 10]{Sk02}
  \cite[Th\'{e}or\`{e}me III.3]{L82}.
  As a consequence, $u_{n_\ell}\to u$ in $L^p(\R^d)$,  as $\ell\to\infty$, and in particular $\|u\|_p=1$.
  Being  continuous and
  convex, the functional $\textup q_\eps$ is weakly lower semicontinuous on $H^s_k$.
   Hence,
   \[\textup R_\eps^k(p)=\lim_{\ell\to\infty}\textup q_\eps(u_{n_\ell})\geq \textup q_\eps(u)=\frac{\textup
   q_\eps(u)}{\|u\|_p^2},\]
   and it follows that $u$ is a minimizer for $\textup R_\eps^k(p)$. Lagrange's multiplier
   rule implies that any minimizer $u$ is a weak solution after multiplication by a nonzero constant.
\end{proof}

\section{A real interpolation bound}\label{app_interpolation}

\begin{proposition}\label{prop:interpolate}
Let $0<r<q<\infty$ and $u:\R^d\to\Co$ be a measurable function such that $\|u\|_r\leq C_1$ and
$|u(x)|\leq C_2(1+|x|)^{-\frac dq}$ for almost every $x\in\R^d$. Then \[\|u\|_q \lesssim_{r,q,d} C_2 \left(1+\log_+\left(\frac{C_1}{C_2}\right)\right)^{\frac1q},\]
where $\log_+:=\log\vee \,{\bf 0}$.
\end{proposition}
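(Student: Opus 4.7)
The plan is to use the layer-cake formula together with the two complementary bounds on the distribution function $|E_\lambda|:=|\{x\in\R^d: |u(x)|>\lambda\}|$.

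First, observe that the pointwise assumption gives $\|u\|_\infty\leq C_2$, so $E_\lambda=\emptyset$ for $\lambda>C_2$. Moreover, if $|u(x)|>\lambda$ then $C_2(1+|x|)^{-d/q}>\lambda$, i.e.\ $|x|<(C_2/\lambda)^{q/d}-1$, which yields the first bound
\[
|E_\lambda|\leq c_d\, C_2^q \lambda^{-q}, \qquad 0<\lambda\leq C_2.
\]
Second, Chebyshev's inequality applied to $\|u\|_r\leq C_1$ yields
\[
|E_\lambda|\leq C_1^r\lambda^{-r}, \qquad \lambda>0.
\]

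Via the layer-cake formula, $\|u\|_q^q=q\int_0^{C_2}\lambda^{q-1}|E_\lambda|\,\textup d\lambda$. The idea is to split this integral at a threshold $\lambda_0\in(0,C_2]$, using the Chebyshev bound on $(0,\lambda_0)$ (which is integrable near $0$ since $q>r$) and the pointwise bound on $(\lambda_0,C_2)$. This gives
\[
\|u\|_q^q \lesssim_{r,q,d} C_1^r\lambda_0^{q-r}+C_2^q\log\!\left(\frac{C_2}{\lambda_0}\right).
\]
If $C_1\leq C_2$, simply take $\lambda_0=C_2$ to obtain $\|u\|_q^q\lesssim C_1^r C_2^{q-r}\leq C_2^q$. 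If $C_1>C_2$, choose the threshold $\lambda_0:=C_2\,(C_2/C_1)^{r/(q-r)}\leq C_2$ so as to equate $C_1^r\lambda_0^{q-r}$ with $C_2^q$; a direct computation then yields
\[
\|u\|_q^q\lesssim_{r,q,d} C_2^q\left(1+\log\!\left(\frac{C_1}{C_2}\right)\right),
\]
from which the claimed estimate follows after taking $q$-th roots.

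There is no serious obstacle here: the argument is essentially a real-interpolation splitting between an $L^r$ bound and an $L^{q,\infty}$-type bound (the pointwise estimate $|u(x)|\leq C_2(1+|x|)^{-d/q}$ implies $u\in L^{q,\infty}$ with norm $\lesssim C_2$, but the critical decay just barely fails $L^q$, producing the logarithmic correction). The only mild subtlety is verifying that the choice of $\lambda_0$ lies in $(0,C_2]$ in the relevant regime and that the constants depend only on $r$, $q$, $d$; both are immediate from the computation above.
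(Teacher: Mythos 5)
Your proof is correct and follows essentially the same route as the paper: a layer-cake decomposition with the two distribution-function bounds (Chebyshev from $\|u\|_r\leq C_1$ and the pointwise decay giving $|E_\lambda|\lesssim_d C_2^q\lambda^{-q}$), split at a threshold chosen to balance the resulting terms $C_1^r\lambda_0^{q-r}$ and $C_2^q\log(C_2/\lambda_0)$. The paper minimizes over the threshold by calculus rather than equating the two terms, but this is the same argument up to constants, and your handling of the case $C_1\leq C_2$ matches the paper's second case.
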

\begin{proof}
We use the layer cake representation \cite[\S~1.13]{LL01} for $|u|$, set
$\lambda(t):=|\{x\in\R^d: |u(x)|\geq t\}|$.
The pointwise assumption implies 
$$
  \lambda(t)\lesssim_d (C_2t^{-1})^q \text{ for every }t>0, 
  \qquad 
  \lambda(t)=0 \text{ if }t\geq C_2.
$$
For any $\delta\in[0,C_2]$, we thus have that
\begin{align*}
\|u\|_q^q
&=q\int_0^{C_2} t^{q-1} \lambda(t)\,\textup d t
\lesssim_d \delta^{q-r} \int_0^\delta t^{r-1} \lambda(t)\,\textup d t+\int_\delta^{C_2} t^{q-1} (C_2t^{-1})^q\,\textup d t\\
&\lesssim \delta^{q-r} \|u\|_r^r + C_2^q\int_\delta^{C_2}
t^{-1}{\d t} 
\leq C_1^r \delta^{q-r}+C_2^q \log(C_2\delta^{-1}).
\end{align*}
Minimizing the latter quantity with respect to $\delta\in[0,C_2]$ leads to the choice
\[ \delta=
 \left\{ \begin{array}{ll}
((q-r)C_1^r  C_2^{-q})^{\frac1{r-q}}, & \textrm{if } C_1C_2^{-1}\geq(q-r)^{-\frac1r},\\
C_2, & \textrm{if }C_1C_2^{-1}<(q-r)^{-\frac1r}.
\end{array} \right. \]
In the first case this yields
\begin{align*}
\|u\|_q^q
&\lesssim_d\frac{C_2^q}{q-r}+C_2^q\left(\log(C_2)+\frac{\log(q-r)}{q-r}+\frac r{q-r}\log(C_1)-\frac q{q-r}\log(C_2)\right)\\
&=C_2^q\left(\frac{1+\log(q-r)}{q-r}-\frac{r}{q-r}\log(C_2)+\frac{r}{q-r}\log(C_1)\right)\\
&\lesssim_{r,q} C_2^q\left(1+\log_+\left(\frac{C_1}{C_2}\right)\right),
\end{align*}
and in the second case we even obtain the  better  bound $\|u\|_q \lesssim C_2$. 
\end{proof}

\section{ The case $k\in\{1,d-1\}$} \label{app_exceptionalcases}

We now explain in more detail why the symmetry groups $G_k=O(d-k)\times O(k)$ must be treated differently
when $k\in\{1,d-1\}$. We focus on $k=1$.
The first observation is that the optimal Stein--Tomas exponent for $G_1$-symmetric
functions is identical to the optimal exponent for general functions. In other words, 
$$
  2_\star^1 = 2_\star = 2\frac{d+1}{d-1},
$$ 
as observed in \cite[Remark 6.1]{MOS21}. As a consequence,
our methods do not allow to prove the strict inequality $\textup R_\eps^\circ(p)<\textup R^1_\eps(p)$ for
sufficiently small $\eps>0$ and any $p>2$, and it is an open question whether this holds or not. 
A related open question is whether some or even all ground states (i.e., minimizers for $\textup
R_\eps^\circ$) are $G_1$-symmetric up to a change of coordinates or whether $G_1$-symmetric solutions exist
at all. In fact, our existence proof of Proposition~\ref{prop_appexistence}
does not carry over to $k=1$ due to the failure of $H^s_1$ compactly embedding into $L^p(\R^d)$. 
To see this, take any nonzero test function $\varphi\in C_0^\infty(\R^2)$ and define
$u_n(x) := \varphi(|x'|^2,|x^d-n|^2)+\varphi(|x'|^2,|x^d+n|^2)$; here, $x=(x',x^d)\in\R^{d-1}\times\R$. Then $\{u_n\}$ is bounded in $H^s_1$,
but does not admit any $L^p(\R^d)$-convergent subsequence, so a compact embedding cannot exist. More
generally, it is unclear whether minimizing sequences for the Rayleigh quotient $\textup R_\eps^1(p)$ behave according to the ``dichotomy'' or the
``compactness'' alternative in Lions' concentration-compactness principle. 
In view of \cite[Lemma~2.4]{SEC13}, ``vanishing'' does not occur in the
Sobolev-subcritical regime $2<p<2_s^\star$.

\medskip

Nevertheless, when $d\geq 3$ we can prove the existence of solutions with the slightly smaller
symmetry group $G:= O(d-1)\times \{1\} \subset O(d)$, where  the evenness requirement with respect to the
last coordinate is dropped. Denote the corresponding Sobolev space by $H^s_G$. 
We first establish the following uniform decay estimate. 
 ~
\begin{proposition} \label{prop:radialdecay}
  Let $d\geq 2, q>2$, and $t>0$. Then there exists $\tau=\tau(d,q,t)>0$ such that, for all radially symmetric functions
  $u\in H^t(\R^d)$, the following holds for all $R>0$:
  $$ 
    \|u\|_{L^q(\R^d\setminus B_R(0))}   
    \lesssim  R^{-\tau} \|u\|_{H^t(\R^{d})}.
  $$ 
\end{proposition}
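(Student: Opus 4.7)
The result quantifies the familiar fact that radial Sobolev functions enjoy pointwise decay at infinity. My plan is to combine the classical Strauss radial lemma with a Littlewood--Paley reduction to handle all $t>0$.

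\smallskip

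\textbf{High regularity, $t\geq 1$.} I would invoke Strauss's radial Sobolev inequality, which asserts that for every radially symmetric $u\in H^1(\mathbb{R}^d)$ one has the pointwise bound $|u(x)|\leq C_d|x|^{-(d-1)/2}\|u\|_{H^1(\mathbb{R}^d)}$ for $|x|\geq 1$. Interpolating this $L^\infty$ bound on $\mathbb{R}^d\setminus B_R(0)$ with the trivial $L^2$ bound gives, for $q>2$ and $R\geq 1$,
$$
\|u\|_{L^q(\mathbb{R}^d\setminus B_R(0))}^q
\leq \|u\|_{L^\infty(|x|>R)}^{q-2}\|u\|_{L^2}^2
\lesssim R^{-(d-1)(q-2)/2}\|u\|_{H^1}^q,
$$
yielding $\tau=(d-1)(q-2)/(2q)$. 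For $R<1$ the estimate is trivial after adjusting the implied constant.

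\smallskip

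\textbf{Low regularity, $0<t<1$.} Here the pointwise version of Strauss fails, so I would use a dyadic frequency decomposition $u=\sum_{N\in 2^{\mathbb{N}_0}} P_N u$. Every $P_N u$ is again radial, and Strauss applied to each piece, combined with Bernstein, gives
$$
|(P_N u)(x)|\lesssim |x|^{-(d-1)/2}\|P_N u\|_{H^1}\lesssim |x|^{-(d-1)/2}N^{1-t}\|u\|_{H^t}\qquad (|x|\geq 1),
$$
whereas $\|P_N u\|_{L^2}\lesssim N^{-t}\|u\|_{H^t}$. Hölder's inequality on $\{|x|>R\}$ then produces, for each dyadic $N$, a quantitative bound of the form $\|P_N u\|_{L^q(|x|>R)}\lesssim R^{-(d-1)(q-2)/(2q)}N^{1-2/q-t}\|u\|_{H^t}$. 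Summing geometrically works directly when $t+2/q>1$; in the remaining range I would split the sum at an $R$-dependent frequency cutoff $N_*(R)$, controlling low frequencies via the spatial decay and high frequencies via the $L^2$-smallness coming from the $H^t$ hypothesis, so that optimizing in $N_*(R)$ yields a positive power of $R^{-1}$.

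\smallskip

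\textbf{Main obstacle.} The delicate step is the low-regularity regime, especially when $q$ is large relative to $t$: the geometric summation over dyadic frequencies does not close without a careful $R$-dependent truncation. This requires balancing the two types of estimates available for $P_N u$ and optimising so that the resulting exponent $\tau=\tau(d,q,t)$ remains strictly positive. Once this is arranged, the proposition follows with an explicit, though non-sharp, value of $\tau$ decreasing as $t\to 0^+$.
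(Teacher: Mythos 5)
Your high-regularity case is essentially fine (for $R<1$ you should invoke the Strauss bound $|u(x)|\lesssim|x|^{(1-d)/2}\|u\|_{H^1}$ for all $x\neq 0$ rather than only for $|x|\geq 1$, since $H^1$ need not embed into $L^q$ for large $q$; the standard proof of the radial lemma gives exactly that, so this is cosmetic). The genuine gap is in the low-regularity regime, and it cannot be patched in the generality you (and, in fairness, the statement itself) aim for. First, your proposed rescue of the divergent sum fails: for the high frequencies $N>N_*(R)$ the only estimate that does not use spatial decay is Bernstein, $\|P_Nu\|_{L^q}\lesssim N^{d(1/2-1/q)}\|P_Nu\|_2\lesssim N^{d(1/2-1/q)-t}\|u\|_{H^t}$, and since $d\geq 2$ the exponent $d(1/2-1/q)-t$ is positive whenever $1-2/q-t>0$, i.e.\ exactly in the range where you need the splitting. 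So the high-frequency tail is not summable, and no choice of $N_*(R)$ closes the argument. Second, and more fundamentally, the inequality is simply false when $t<1/2-1/q$: take $u(x)=\phi(|x|)\,\big||x|-1\big|^{-\alpha}$ with $\phi$ a bump supported in $[1/2,3/2]$ and $\alpha\in(1/q,\,1/2-t)$, which is nonempty in that range. Then $u$ is radial and $u\in H^t(\R^d)$ because $|s|^{-\alpha}\in H^t_{\mathrm{loc}}(\R)$ when $\alpha+t<1/2$, yet $\|u\|_{L^q(\R^d\setminus B_{1/2}(0))}=\infty$ since $\alpha q>1$. Any correct proof must therefore use $t>1/2-1/q$ somewhere.

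In the range where the statement is true, your Littlewood--Paley scheme can be repaired, but not by applying the $H^1$ Strauss lemma blockwise: that step costs a full power of $N$ (since $\|P_Nu\|_{H^1}\cong N\|P_Nu\|_2$), whereas the sharp frequency-localized radial estimate $\big\||x|^{(d-1)/2}P_Nu\big\|_{L^\infty}\lesssim N^{1/2}\|P_Nu\|_2$ costs only $N^{1/2}$. With the sharp bound your dyadic term becomes $R^{-(d-1)(q-2)/(2q)}N^{1/2-1/q-t}\|u\|_{H^t}$, which sums geometrically precisely when $t>1/2-1/q$, the optimal range. The paper proceeds differently: it quotes a fractional Strauss estimate $|u(x)|\lesssim|x|^{(1-d)/2}\|u\|_{H^{1/2+\eps}}$ from De N\'apoli and interpolates the resulting $L^{\tilde q}(\R^d\setminus B_R(0))$ bound against the trivial $L^2$ bound; tracking the interpolation parameters there yields the same restriction, namely a Sobolev index no smaller than $(1-2/q)(\tfrac12+\eps)$, which tends to $1/2-1/q$. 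In the application (Theorem~\ref{thm:ExistenceGSk1}) the proposition is only needed for \emph{some} $q>2$, which may be taken close to $2$ so that $1/2-1/q<t$, and the downstream argument is unaffected; but as a standalone claim for all $q>2$ and all $t>0$, neither your argument nor the quoted one can succeed.
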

\begin{proof}
Fix $\eps>0$. By~\cite[Theorem~3.1]{DN18}, there exists $\vartheta\in (0,1)$ such that\footnote{Here, $[u]^2_{H^s}:= \int_{\R^d} |\widehat{u} (\xi)|^2 |\xi|^{2s}\,\d \xi.$} 
$$|u(x)|    \lesssim |x|^{\frac{1-d}{2}} [u]_{H^{\frac{1}{2}+\eps}}^\vartheta  \|u\|_2^{1-\vartheta}    \lesssim |x|^{\frac{1-d}{2}} \|u\|_{H^{\frac{1}{2}+\eps}}. 
  $$
  This implies, for $\frac{2d}{d-1}<\tilde q\leq\infty$,
  $$
    \|u\|_{L^{\tilde q}(\R^d\setminus B_R(0))} 
    \lesssim R^{\frac{d}{\tilde q}+\frac{1-d}{2}}  \|u\|_{H^{\frac{1}{2}+\eps}}.
$$
  We interpolate this estimate with the trivial bound    
  $\|u\|_{L^2(\R^d\setminus B_R(0))}  \leq \|u\|_2 = \|u\|_{H^0}$, and obtain
  $$
    \|u\|_{L^q(\R^d\setminus B_R(0))} 
    \lesssim R^{\theta(\frac{d}{\tilde q}+\frac{1-d}{2})} 
      \|u\|_{H^{\theta(\frac{1}{2}+\eps)}},
      \quad\text{if } \frac{\theta}{\tilde q}+\frac{1-\theta}{2} = \frac{1}{q},\; \theta\in [0,1].
  $$ 
  The freedom to arbitrarily choose $\theta\in (0,1],\tilde q>\frac{2d}{d-1}$ and $\eps>0$  allows us to
  conclude.
\end{proof}
~
\begin{theorem}\label{thm:ExistenceGSk1}
  Assume $d\geq 3$, $2<p<2_\star^s$, and $\eps>0$. 
Let $g_\eps$ be $(s,\gamma)$-admissible
   for some $s>\frac d{d+1}$ and $\gamma>1$. Then the infimum defining the Rayleigh quotient 
  \begin{equation}\label{eq_newRayleigh}
        \inf_{{\bf 0}\neq u\in H^s_G} \frac{\textup q_\eps(u)}{\|u\|_p^2}
  \end{equation}
  is attained. Any minimizer is a $G$-symmetric non-trivial weak solution of \eqref{eq_BiHarNLSGen} after
  multiplication by a nonzero constant. 
\end{theorem}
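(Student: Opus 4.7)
I would follow the direct method of the calculus of variations, adapting the argument of Proposition~\ref{prop_appexistence} by a translation step to overcome the failure of the compact embedding $H^s_G\hookrightarrow L^p(\R^d)$. Denote by $\textup R^G_\eps(p)$ the infimum in~\eqref{eq_newRayleigh} and select a minimizing sequence $\{u_n\}\subset H^s_G$ with $\|u_n\|_p=1$ and $\textup q_\eps(u_n)\to \textup R^G_\eps(p)$. Since $\textup q_\eps^{1/2}$ is an equivalent norm on $H^s(\R^d)$ for fixed $\eps>0$, the sequence is bounded in $H^s$, and up to passing to a subsequence one has weak convergence $u_n\rightharpoonup u_\infty\in H^s_G$ together with strong $L^p_{\textup{loc}}$ and a.e.\ convergence by Rellich. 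The plan is then to ensure $u_\infty\neq 0$ after a translation and to upgrade to strong $L^p(\R^d)$-convergence.

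For the first part, Lions' non-vanishing lemma \cite[Lemma 2.4]{SEC13} yields $R_0,\beta_0>0$ and centers $y_n=(y_n',y_n^d)\in\R^{d-1}\times\R$ with $\int_{B_{R_0}(y_n)}|u_n|^p\,dx\geq\beta_0$. The role of Proposition~\ref{prop:radialdecay} is to force $|y_n'|$ to be bounded: applied in dimension $d-1\geq 2$ to the $G$-symmetric slices $u_n(\cdot,x^d)$ and integrated in $x^d$ via a mixed-norm embedding $H^s(\R^d)\hookrightarrow L^p_{x^d}H^t_{x'}$ for suitable $t\in(0,s)$ (obtained by combining one-dimensional Sobolev embedding in $x^d$ with the Fourier bound $(1+|\xi'|^2)^t(1+|\xi^d|^2)^\sigma\leq(1+|\xi|^2)^{t+\sigma}$), it yields the uniform tail estimate $\|u_n\|_{L^p(|x'|>R)}\lesssim R^{-\tau}\|u_n\|_{H^s}$. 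Since a nontrivial mass $\beta_0$ cannot concentrate at $|x'|\gg 1$, this forces $|y_n'|$ to remain bounded, using crucially $d-1\geq 2$, i.e.\ $d\geq 3$. Replacing $u_n$ by $\widetilde u_n(x',x^d):=u_n(x',x^d-y_n^d)\in H^s_G$---translation in $x^d$ preserves $G$-symmetry, unlike translation in $x'$---I arrange that $\int_{B_R(0)}|\widetilde u_n|^p\,dx\geq\beta$ for some enlarged $R>0$ and some $\beta>0$. The weak limit $u_\infty$ of a further subsequence of $\widetilde u_n$ in $H^s_G$ is then nonzero by Rellich on $B_R(0)$.

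For the second part, $H^s_G$ is closed under subtraction, so both $u_\infty$ and $\widetilde u_n-u_\infty$ lie in $H^s_G$, and the Rayleigh inequality $\textup q_\eps(v)\geq \textup R^G_\eps(p)\|v\|_p^2$ applies to each summand. Combining the Brezis--Lieb identity $\|\widetilde u_n\|_p^p = \|u_\infty\|_p^p + \|\widetilde u_n-u_\infty\|_p^p + o(1)$ with the quadratic-form decomposition $\textup q_\eps(\widetilde u_n)=\textup q_\eps(u_\infty)+\textup q_\eps(\widetilde u_n-u_\infty)+o(1)$ (whose cross term vanishes by weak $H^s$-convergence, since $\textup q_\eps$ is given by a Hilbert inner product for fixed $\eps$), and letting $n\to\infty$, one obtains
\[\textup R^G_\eps(p)\geq \textup R^G_\eps(p)(a^2+b^2),\qquad a^p+b^p=1,\]
where $a:=\|u_\infty\|_p>0$ and $b:=\lim_n\|\widetilde u_n-u_\infty\|_p\geq 0$. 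Since $t^p<t^2$ on $(0,1)$ for $p>2$, the only solution with $a>0$ is $a=1,\,b=0$, giving strong $L^p(\R^d)$-convergence and minimality of $u_\infty$. The principle of symmetric criticality (applicable since $G$ acts via orthogonal transformations and both $\textup q_\eps$ and $\|\cdot\|_p$ are $G$-invariant) together with Lagrange's multiplier rule then promote $u_\infty$ to a nontrivial $G$-symmetric weak solution of~\eqref{eq_BiHarNLSGen} after multiplication by a nonzero constant.

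The main obstacle is the $x'$-tightness step---ruling out mass escape in the $x'$-direction via Proposition~\ref{prop:radialdecay} combined with an auxiliary mixed-norm Sobolev embedding---which is the one place where $d\geq 3$ enters essentially, consistent with the discussion at the start of this appendix of why compactness fails in the $G_1$-symmetric setting. Once this step is carried out, the remaining Brezis--Lieb/Rayleigh argument is routine.
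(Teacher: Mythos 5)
Your proposal is correct and follows the same basic strategy as the paper: a minimizing sequence, translation \emph{only in the $x^d$-direction} (so as to stay in $H^s_G$), and Proposition~\ref{prop:radialdecay} applied in dimension $d-1\geq 2$ to the radial slices $x'\mapsto u_n(x',x^d)$ to rule out escape of mass in the $x'$-directions — this is exactly where $d\geq 3$ enters in the paper as well. The differences are in the bookkeeping. The paper outsources the concentration-compactness steps (existence of translations $x_n$ with nonzero weak limit, and the passage from a nonzero weak limit to a minimizer) to the proof of \cite[Theorem 1]{BLSS20}, and establishes boundedness of $\{x_n'\}$ by a contradiction argument: it tests the weak limit against a compactly supported $\varphi$ and combines Proposition~\ref{prop:radialdecay} with the fractional trace theorem $H^s(\R^d)\to H^t(\R^{d-1})$, $t\in(0,s-\tfrac12)$, uniformly in the slice height; since $\varphi$ has compact $x^d$-support, an $L^\infty_{x^d}H^t_{x'}$ bound suffices there. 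You instead make the concentration-compactness explicit (Lions non-vanishing via \cite[Lemma 2.4]{SEC13} plus Brezis--Lieb and strict subadditivity of $t\mapsto t^{2/p}$), and you obtain $x'$-tightness from a global tail estimate $\|u_n\|_{L^p(|x'|>R)}\lesssim R^{-\tau}\|u_n\|_{H^s}$, which requires the mixed-norm embedding $H^s(\R^d)\hookrightarrow L^p_{x^d}H^t_{x'}$ rather than a trace bound. That embedding is the one point you assert rather than prove; it does hold (the one-dimensional Sobolev embedding is valid for Hilbert-space-valued functions, and the exponent arithmetic closes because $p<2_s^\star$ forces $s>1-\tfrac 2p$), but it is genuinely more than the trace theorem the paper uses, so you should either prove it or restructure as in the paper, where the compact support of the test function removes the need for integrability in $x^d$. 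With that caveat, your argument is complete and self-contained where the paper's is partly by reference.
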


\begin{proof}
  The argument mimicks the proof of \cite[Theorem 1]{BLSS20},
  so we concentrate on the novel aspects.
  As in \cite{BLSS20}, one shows that there exist a minimizing sequence $\{u_n\}\subset H^s_G$, a sequence
  $\{x_n\}\subset\R^d$ and a nonzero function $u\in H^s(\R^d)$, such that $\|u_n\|_p=1$ and
  $u_n(\cdot+x_n)\rightharpoonup u$, as $n\to\infty$; see  \cite[Eq.\@ (2.2)]{BLSS20}.
 Writing $x_n=(x_n',x_n^d)\in\R^{d-1}\times\R$, we shall prove below  that the sequence $\{x_n'\}$
  is bounded in $\R^{d-1}$ and hence convergent to some limit $x_\infty'\in\R^{d-1}$, possibly after extraction of  a
  subsequence. We then obtain that $\tilde u_n(x) := u_n(x',x^d+x_n^d)$ is a
  minimizing sequence in $H^s_G$ that weakly converges to the nonzero function $\tilde u\in H^s_G$
  given by $\tilde u(x):=u(x'-x_\infty',x^d)$. 
  Here we used that $H^s_G$ is weakly closed in $H^s(\R^d)$. The argument can then be completed as
  in~\cite{BLSS20}, yielding $\tilde u$ as a minimizer for the Rayleigh quotient \eqref{eq_newRayleigh} and
  establishing the claim.

  \medskip

  It remains to prove that the sequence $\{x_n'\}\subset \R^{d-1}$ is bounded. Suppose not. Choose a test function
  $\varphi:\R^d\to\R$ with support in $K:=\overline{B_R(0)}\times [-R,R]\subset\R^d$ such that $\mu:= \int_{K}
  u\varphi \neq 0$; here, $B_R(0)$ denotes the ball in $\R^{d-1}$ with radius $R$. Then
  \begin{align*}
    \mu 
    &= \int_{K} u(x)\varphi(x)\d x \\
    &= \lim_{n\to\infty} \int_{K} u_n(x+x_n) \varphi(x)\d x \\
    &= \lim_{n\to\infty} \int_{K+(x_n',0)} u_n(x',x^d+x_n^d)  \varphi(x'-x_n',x^d) \d x'\d x^d \\
    &= \lim_{n\to\infty} \int_{-R}^{R} 
    \left(\int_{B_R(0)+x_n'} u_n(x',x^d+x_n^d)  \varphi(x'-x_n',x^d) \d x'\right)\d x^d.        
  \end{align*}
  We now apply Proposition~\ref{prop:radialdecay} to the radially symmetric functions 
  $$
    \R^{d-1}\ni x'\mapsto u_n(x',x^d+x_n^d)\in\R
  $$ 
  with $t\in (0,s-\frac{1}{2})$ and spatial dimension $d-1\geq 2$. This and the
  fractional Trace Theorem, i.e., the uniform boundedness of the trace operator $H^s(\R^d)\ni u\mapsto
  u|_{\R^{d-1}\times\{z\}}\in H^t(\R^{d-1})$ with respect to $z\in\R$, implies, for some $q>2$ and
  $\tau>0$,
  \begin{align*}
    \mu 
    &\leq \liminf_{n\to\infty} \int_{-R}^{R} 
    \left( \|u_n(\cdot,x^d+x_n^d)\|_{L^q(B_R(0)+x_n')}  \|\varphi(\cdot,x^d)\|_{L^{q'}(B_R(0)+x_n')} 
    \right)\d x^d \\
    &\lesssim_R \liminf_{n\to\infty}  \|\varphi\|_{\infty} 
    \int_{-R}^{R}  \|u_n(\cdot,x^d+x_n^d)\|_{L^q(\R^{d-1}\setminus B_{|x_n'|-R}(0))}   \d x^d \\
    &\lesssim_R \liminf_{n\to\infty}  \|\varphi\|_{\infty} 
    \int_{-R}^{R} (|x_n'|-R)^{-\tau} \|u_n(\cdot,x^d+x_n^d)\|_{H^t(\R^{d-1})}   \d x^d \\
    &\lesssim_R \liminf_{n\to\infty}  \|\varphi\|_{\infty} 
    (|x_n'|-R)^{-\tau} \|u_n\|_{H^s}. 
  \end{align*}
  The last term is zero because $\{u_n\}$ is bounded in $H^s_G$ and $|x_n'|\to \infty$ by assumption.
  This contradicts $\mu\neq 0$, so $\{x_n'\}\subset\R^{d-1}$ must be bounded.
\end{proof}

The preceding proof does not work for $G_1$-symmetric functions since {\it a priori} the $\tilde u_n$ are
only $G$-symmetric. In other words, the $\tilde u_n$
might not be even in the last coordinate. But if $u$ denotes a $G$-symmetric
minimizer/solution, then its reflection with respect to $x^d$ is another minimizer/solution. As a consequence, we find
either one $G_1$-symmetric minimizer or two distinct $G$-symmetric minimizers which are
related to each other by a reflection. Given Theorem~\ref{thm:ExistenceGSk1} and Theorem~\ref{thm_gen}, one may
ask whether ground states are $G$-symmetric or not. 
Furthermore, as far as the asymptotic regime  is concerned, one may check that
the Fourier transform of any $G$-symmetric minimizer concentrates on the unit sphere and
becomes rough  as $\eps\to 0^+$; the proof is identical to the $G_k$-symmetric case.

%%%%%%%%%%%%%%%%%%%


\begin{thebibliography}{03}

\bibitem{BL}
\textsc{J. Bergh, J. L\"ofstr\"om},
\newblock Interpolation spaces. An introduction.
\newblock Grundlehren der Mathematischen Wissenschaften, No. 223. Springer-Verlag, Berlin-New York, 1976.

\bibitem{BLSS20}
\textsc{L. Bugiera, E. Lenzmann, A. Schikorra, J. Sok},
\newblock \textit{On symmetry of traveling solitary waves for dispersion generalized NLS.}
\newblock Nonlinearity \textbf{33} (2020), no.~6, 2797--2819.

\bibitem{CGL15}
\textsc{Y. Cho, Z. Guo, S. Lee},
\newblock \textit{A Sobolev estimate for the adjoint restriction operator.}
\newblock Math. Ann. \textbf{362} (2015), no.~3--4, 799--815.

\bibitem{DN18}
\textsc{P. L. De N\'{a}poli},
\newblock \textit{Symmetry breaking for an elliptic equation involving the fractional {L}aplacian.}
\newblock  Differential Integral Equations \textbf{31} (2018), no~1--2, 75--94.

\bibitem{Gr14}
\textsc{L. Grafakos},
\newblock Classical Fourier analysis.
\newblock Third edition. Graduate Texts in Mathematics, 249. Springer, New York, 2014.

\bibitem{LS2021}
\textsc{E. Lenzmann,  J. Sok},
\newblock \textit{A sharp rearrangement principle in {F}ourier space and
              symmetry results for {PDE}s with arbitrary order},
\newblock Int. Math. Res. Not. IMRN   (2021), no.~19, 15040--15081.

\bibitem{LW21}
\textsc{E. Lenzmann, T. Weth},
\newblock \textit{Symmetry breaking for ground states of biharmonic NLS via Fourier extension estimates.}
\newblock arXiv:2110.10782. 

\bibitem{LL01}
\textsc{E. H. Lieb, M. Loss},
\newblock Analysis.
\newblock Second edition. Graduate Studies in Mathematics, 14. American Mathematical Society, Providence, RI, 2001.


\bibitem{LM72}
\textsc{J.-L. Lions, E. Magenes},
\newblock Non-homogeneous boundary value problems and applications. Vol. I.
\newblock Translated from the French by P. Kenneth.
\newblock Die Grundlehren der mathematischen Wissenschaften, Band 181. Springer-Verlag, New York-Heidelberg, 1972.

	\bibitem{L82}
\textsc{P.-L. Lions},
\newblock \textit{Sym\'{e}trie et compacit\'{e} dans les espaces de {S}obolev},
\newblock J. Functional Analysis \textbf{49} (1982), no.~3, 315--334.	

\bibitem{Ma19}
\textsc{R. Mandel},
\newblock \textit{Uncountably many solutions for nonlinear Helmholtz and curl-curl equations.}
\newblock Adv. Nonlinear Stud. \textbf{19} (2019), no.~3, 569--593.

\bibitem{MOS21}
\textsc{R. Mandel, D. Oliveira e Silva},
\newblock \textit{The Stein--Tomas inequality under the effect of symmetries.}
\newblock arXiv:2106.08255. J. Anal. Math., to appear. 

\bibitem{SEC13}
\textsc{S. Secchi},
\newblock \textit{Ground state solutions for nonlinear fractional Schr\"odinger equations in $\R^N$.}
\newblock J. Math. Phys. \textbf{54}, 031501 (2013).  

\bibitem{St93}
\textsc{E. M. Stein},
\newblock Harmonic analysis:\@ real-variable methods, orthogonality, and oscillatory integrals.
\newblock Princeton Mathematical Series, 43. Monographs in Harmonic Analysis, III. Princeton University Press, Princeton, NJ, 1993.

\bibitem{To75}
\textsc{P. Tomas},
\newblock\textit{A restriction theorem for the Fourier transform.}
\newblock Bull.\@ Amer.\@ Math.\@ Soc.\@ \textbf{81} (1975), 477--478.

\end{thebibliography}
\end{document}